\def\beq{\begin{equation}}
\def\eeq{\end{equation}}
\def\ba{\begin{array}}
\def\ea{\end{array}}
\def\R{\mathbb R}
\newtheorem{thm}{Theorem}[section]
\newtheorem{lm}[thm]{Lemma}
\newtheorem{crl}[thm]{Corollary}
\theoremstyle{definition}
\newtheorem{rem}[thm]{Remark}
\newtheorem{df}[thm]{Definition}
\theoremstyle{remark}
\begin{document}
\pagestyle{plain}
\title{$P$-Laplacian equations with general Choquard nonlinearity on lattice graphs}

\author{Lidan Wang}
\email{wanglidan@ujs.edu.cn}
\address{Lidan Wang: School of Mathematical Sciences, Jiangsu University, Zhenjiang 212013, People's Republic of China}

\begin{abstract}
In this paper, we study the following $p$-Laplacian equation
$$
-\Delta_{p} u+h(x)|u|^{p-2} u=\left(R_{\alpha} *F(u)\right)f(u)
$$
on lattice graphs $\mathbb{Z}^N$, where $p\geq 2$, $\alpha \in(0,N)$ are constants and $R_{\alpha}$ is the Green's function of the discrete fractional Laplacian that behaves as the Riesz potential. Under different assumptions on potential function $h$, we prove the existence of ground state solutions respectively by the methods of Nehari manifold.
\end{abstract}

\maketitle

{\bf Keywords:} $p$-Laplacian equations, Choquard-type nonlinearity, existence, ground state solutions, Nehari manifold 

\
\

{\bf Mathematics Subject Classification 2020:} 35J20, 35J60, 35R02.

\section{Introduction}

The $p$-Laplacian equation 
\begin{equation*}\label{a}
-\Delta_{p} u+h(x)|u|^{p-2} u=f(x, u), \quad x \in \mathbb{R}^N,
\end{equation*}
where $\Delta_pu=\text{div}(|\nabla u|^{p-2}\nabla u)$, has drawn lots of interest in recent years. In particular, for $p=2$,  this equation turns into the famous semilinear Schr\"{o}dinger equation, which has been studied extensively in the literature, see for examples \cite{DN,JT,LWZ,LW,R,SW}. For $p>2$, we refer the readers to \cite{AF,CT,EE,L1,LZ} for this equation.

Nowadays, many researchers turn to study differential equations on graphs, see for examples \cite{GJ,GJH,HW,HLW,LY}. The discrete analog of the above equation is
\begin{equation*}\label{b}
-\Delta_{p} u+h(x)|u|^{p-2} u=f(x, u), \quad x \in \mathbb{Z}^N,
\end{equation*}
where $\Delta_p u=\frac{1}{2} \sum\limits_{y \sim x}\left(|\nabla u|^{p-2}(y)+|\nabla u|^{p-2}(x)\right)(u(y)-u(x))$. The $p$-Laplacian on graphs was first introduced by \cite{NY} and has been well studied ever since. For $p=2$, see for examples  \cite{GLY,HLW,HX,W2,ZZ} for this discrete Schr\"{o}diner equation. For $p>2$, we refer the readers to \cite{M,HS,ZL} for this discrete $p$-Laplacian equation.
  
Very recently, the discrete Schr\"{o}dinger equations with Choquard-type  nonlinearity has attracted much attention. For example, Wang \cite{W1} and Wang et al. \cite{WZW} established the existence and asymptotic behaviors of ground state solutions to the Choquard equation on lattice graphs respectively. The authors in \cite{LW1, LZ1,LZ2} obtained the existence of ground state solutions to biharmonic Choquard equations on lattice graphs. Moreover, Wang \cite{W4} proved the existence of ground state solutions to a class of $p$-Laplacian equations with power Choquard nonlinearity on lattice graphs.
    
Inspired  by the papers above, in this paper, we study the following $p$-Laplacian equation with general convolution nonlinearity
\begin{equation}\label{aa}
-\Delta_{p} u+h(x)|u|^{p-2} u=\left(R_{\alpha} *F(u)\right)f(u) 
\end{equation}
on lattice graphs $\mathbb{Z}^N$, where $\Delta_{p} u(x)=\frac{1}{2} \sum\limits_{y \sim x}\left(|\nabla u|^{p-2}(y)+|\nabla u|^{p-2}(x)\right)(u(y)-u(x))$ with $p\geq 2$ and $R_\alpha$ with $\alpha \in(0, N)$ represents the Green's function of the discrete fractional Laplacian, $$ R_{\alpha}(x,y)=\frac{K_{\alpha}}{(2\pi)^N}\int_{\mathbb{T}^N}e^{i(x-y)\cdot k}\mu^{-\frac{\alpha}{2}}(k)\,dk,\quad x,y\in \mathbb{Z}^N,$$
which contains the fractional degree
$$K_\alpha=\frac{1}{(2\pi)^N}\int_{\mathbb{T}^N}\mu^{\frac{\alpha}{2}}(k)\,dk,\,\mu(k)=2N-2\sum\limits_{j=1}^N \cos(k_j),$$
where $\mathbb{T}^N=[0,2\pi]^N,\,k=(k_1,k_2,\dots,k_N)\in\mathbb{T}^N,$ and $\frac{2\pi}{N_j}\ell_j\rightarrow k_j$,\,$\ell_j=0,1,\dots,N_j-1,$ as $N_j\rightarrow\infty$. Clearly, the Green's function $R_{\alpha}$ has no singularity at $x=y$. By \cite{MC}, one sees that, for $|x-y|\gg 1$, $$R_\alpha\simeq|x-y|^{-(N-\alpha)},\quad N>\alpha.$$

Now we state the assumptions on potential $h$ and the nonlinearity $f$:
\begin{itemize}
\item[($h_1$)] for any $x\in\mathbb{Z}^N$, there exists a constant $h_0>0$ such that $h(x) \geq h_0$;
\item[($h_2$)] $h(x)$ is $\tau$-periodic for $x\in\mathbb{Z}^N$ with $\tau\in\mathbb{Z};$
\item[($h_3$)] there exists a point $x_0\in\mathbb{Z}^N$ such that $h(x)\rightarrow\infty$ as $|x-x_0|\rightarrow\infty;$
\item[($f_1$)] $f\in C(\mathbb{R},\mathbb{R})$ and $f(t)=o\left(|t|^{p-1}\right)$ as $t\rightarrow 0$;
\item[($f_2$)] there exist constants $C>0$ and $\tau>\frac{(N+\alpha)p}{2N}$ such that
$$
|f(t)|\leq C(1+|t|^{\tau-1}), \quad t\in \mathbb{R};
$$ 
\item[($f_3$)] there exists a constant $\theta>p$ such that $$0\leq \theta F(t)=\theta \int_{0}^{t}f(s)\,ds \leq 2f(t)t,\quad t\in \mathbb{R};$$
\item[($f_4$)] for any $u\in H\backslash\{0\}$,$$\frac{\int_{\mathbb{Z}^N}(R_\alpha \ast F(tu))f(tu)u\,d\mu}{t^p}$$ is strictly increasing with respect $t\in (0, \infty)$.
\end{itemize}

Obviously, by $(f_1)$ and $(f_2)$, one has that for any $\varepsilon>0$, there exists $C_\varepsilon>0$ such that
\begin{equation}\label{ad}
|f(t)|\leq\varepsilon|t|^{p-1}+C_\varepsilon|t|^{\tau-1},\quad t\in \mathbb{R},
\end{equation}
and hence
\begin{equation}\label{ae}
|F(t)|\leq\varepsilon|t|^{p}+C_\varepsilon|t|^{\tau},\quad t\in \mathbb{R}.
\end{equation}

Our main results are as follows.
\begin{thm}\label{t2}
 Assume that $(h_1)$, $(h_2)$ and $(f_1)$-$(f_4)$ hold. Then the equation (\ref{aa}) has a ground state solution.
\end{thm}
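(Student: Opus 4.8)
The plan is to realize the solutions of \eqref{aa} as critical points of the energy functional
$$J(u)=\frac{1}{p}\|u\|^p-\frac{1}{2}\int_{\mathbb{Z}^N}\bigl(R_\alpha\ast F(u)\bigr)F(u)\,d\mu,\qquad \|u\|^p=\frac12\sum_{x}\sum_{y\sim x}|u(y)-u(x)|^p+\sum_x h(x)|u(x)|^p,$$
defined on the Banach space $H$ of functions with $\|u\|<\infty$; by $(h_1)$ the norm controls the $\ell^p$-norm, which on the lattice yields the embeddings $H\hookrightarrow\ell^q$ for every $q\ge p$. First I would verify, using the growth bounds \eqref{ad}--\eqref{ae} together with the discrete Hardy--Littlewood--Sobolev estimate coming from $R_\alpha\simeq|x-y|^{-(N-\alpha)}$, that $J\in C^1(H,\mathbb{R})$ with
$$\langle J'(u),v\rangle=\langle -\Delta_p u+h|u|^{p-2}u,\,v\rangle-\int_{\mathbb{Z}^N}\bigl(R_\alpha\ast F(u)\bigr)f(u)\,v\,d\mu,$$
so that critical points are precisely the weak solutions of \eqref{aa}. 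I would then minimize $J$ over the Nehari manifold $\mathcal{N}=\{u\in H\setminus\{0\}:\langle J'(u),u\rangle=0\}$ and set $c=\inf_{\mathcal{N}}J$.

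Next I would establish the geometry of $\mathcal{N}$. Hypothesis $(f_4)$ guarantees that for each $u\neq0$ the map $t\mapsto J(tu)$ has a unique critical point $t_u>0$, which is a strict maximum; hence every ray through the origin meets $\mathcal{N}$ exactly once and $c$ agrees with the mountain-pass level. Combining $(f_1)$ and \eqref{ae} gives $\inf_{\mathcal{N}}\|u\|>0$ and $c>0$. Using the Ambrosetti--Rabinowitz-type condition $(f_3)$, any sequence with $J(u_n)\to c$ and $J'(u_n)\to 0$ is bounded, since
$$c+o(1)\ge J(u_n)-\frac1\theta\langle J'(u_n),u_n\rangle\ge\Bigl(\frac1p-\frac1\theta\Bigr)\|u_n\|^p,$$
where the discarded nonlocal term is nonnegative because $(f_3)$ yields $\tfrac1\theta f(t)t-\tfrac12 F(t)\ge0$ and $R_\alpha,F\ge0$. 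Applying Ekeland's variational principle on $\mathcal{N}$, I may take a minimizing sequence that is simultaneously a Palais--Smale sequence for $J$ at level $c$.

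The decisive step is compactness, which fails outright under the periodic hypothesis $(h_2)$ since the embedding $H\hookrightarrow\ell^q$ is not compact. Writing $u_n\rightharpoonup u$ for the bounded Palais--Smale sequence, I would invoke a discrete Lions-type lemma: if $\sup_{y\in\mathbb{Z}^N}\sum_{|x-y|\le 1}|u_n(x)|^p\to 0$, then $u_n\to 0$ in $\ell^q$ for every $q>p$; by \eqref{ad}--\eqref{ae} and HLS the nonlocal terms then vanish, forcing $\|u_n\|\to0$ and $c=0$, a contradiction. Hence non-vanishing holds, giving $\delta>0$ and points $y_n$ with $\sum_{|x-y_n|\le 1}|u_n(x)|^p\ge\delta$. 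Exploiting $(h_2)$, I translate by the period lattice, $\tilde u_n:=u_n(\cdot+k_n)$ with $k_n\in\tau\mathbb{Z}^N$; periodicity preserves both $\|\tilde u_n\|$ and the Palais--Smale property, so $\tilde u_n\rightharpoonup\tilde u\neq0$ with $J'(\tilde u)=0$. Finally, by weak lower semicontinuity of the norm and Fatou applied to the nonnegative integrand from $(f_3)$,
$$c\le J(\tilde u)=J(\tilde u)-\tfrac1\theta\langle J'(\tilde u),\tilde u\rangle\le\liminf_n\Bigl[J(\tilde u_n)-\tfrac1\theta\langle J'(\tilde u_n),\tilde u_n\rangle\Bigr]=c,$$
so $\tilde u\in\mathcal{N}$ attains $c$ and is the desired ground state.

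I expect the main obstacle to be precisely this compactness step: proving the discrete Lions lemma in the $W^{1,p}$ setting, verifying that the nonlocal Choquard term $\bigl(R_\alpha\ast F(u_n)\bigr)f(u_n)$ passes to the weak limit (a discrete Brezis--Lieb-type splitting for the convolution), and confirming that translation by the period genuinely preserves the Palais--Smale sequence. The Riesz-potential decay of $R_\alpha$ and the ``wrong-way'' lattice embeddings $\ell^p\hookrightarrow\ell^q$ are what I expect to make these nonlocal convergences go through.
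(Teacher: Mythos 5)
Your overall architecture --- Nehari manifold, positivity of the level $c$, boundedness of Palais--Smale sequences via $(f_3)$, the vanishing/non-vanishing dichotomy with a discrete Lions lemma, translation by the period lattice using $(h_2)$, and the Fatou argument to show the level is attained --- coincides with the paper's. But there is one genuine gap, and it sits exactly where the paper has to work hardest: you write that you will apply Ekeland's variational principle \emph{on} $\mathcal{N}$ to produce a minimizing sequence that is simultaneously a Palais--Smale sequence for $J$. Under the standing hypotheses $f$ is only continuous, so $u\mapsto\langle J'(u),u\rangle$ is merely continuous, not $C^1$; hence $\mathcal{N}$ is not known to be a $C^1$ submanifold of $H$, and Ekeland on $\mathcal{N}$ only controls the variation of $J$ along $\mathcal{N}$ in a metric sense --- it does not by itself yield $J'(u_n)\to 0$ in the dual of $H$, because the usual Lagrange-multiplier elimination requires differentiability of the constraint. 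The paper circumvents precisely this by the Szulkin--Weth generalized Nehari manifold: the radial projection $m:S\to\mathcal{M}$, $w\mapsto s_w w$, is shown to be a homeomorphism from the unit sphere (Lemma \ref{lg}), the composed functional $\Psi=J\circ m$ is $C^1$ on $S$ even though $\mathcal{M}$ is not a $C^1$ manifold (Lemma \ref{lgk}), and Ekeland is applied on $S$, with Palais--Smale sequences transferring back to $J$. Your own observation that $c$ equals the mountain-pass level suggests an alternative repair (the mountain pass theorem without the Palais--Smale condition produces a $(PS)_c$ sequence directly, which is how the paper phrases Corollary \ref{jb}), but as written the step is unjustified.

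Two smaller points. First, the norm you write, $\frac12\sum_x\sum_{y\sim x}|u(y)-u(x)|^p+\sum_x h(x)|u(x)|^p$, is the anisotropic $p$-energy; the paper uses $\int_{\mathbb{Z}^N}|\nabla u|^p\,d\mu$ with $|\nabla u|(x)=\bigl(\frac12\sum_{y\sim x}(u(y)-u(x))^2\bigr)^{1/2}$, which is the energy whose Euler--Lagrange operator is the stated $\Delta_p$. With your energy the critical points solve a different equation. Second, you still owe the verification that the weak limit of the translated sequence is a critical point of $J$; the paper does this by testing against finitely supported $\phi$ and using weak convergence of $R_\alpha\ast F(v_n)$ in $\ell^{2N/(N-\alpha)}(\mathbb{Z}^N)$, a step you correctly flag as an obstacle but do not carry out. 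These are repairable; the Ekeland-on-$\mathcal{N}$ step is the one that needs a genuinely new ingredient.
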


\begin{thm}\label{t1}
Assume that $(h_1)$, $(h_3)$ and $(f_1)$-$(f_4)$ hold. Then the equation (\ref{aa}) has a ground state solution.    
\end{thm}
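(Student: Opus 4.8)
The plan is to realize the ground state as a minimizer of the energy functional on its Nehari manifold, exploiting the coercivity $(h_3)$ to recover the compactness that is unavailable in the merely periodic setting of Theorem \ref{t2}. I work in the Banach space
$$H=\Big\{u:\mathbb{Z}^N\to\mathbb{R}\ :\ \|u\|_H^p:=\int_{\mathbb{Z}^N}\big(|\nabla u|^{p}+h(x)|u|^{p}\big)\,d\mu<\infty\Big\},$$
and set
$$J(u)=\frac1p\|u\|_H^p-\frac12\int_{\mathbb{Z}^N}\big(R_\alpha*F(u)\big)F(u)\,d\mu .$$
Using (\ref{ad})--(\ref{ae}) together with the discrete Hardy--Littlewood--Sobolev inequality for $R_\alpha$ (whose kernel decays like $|x-y|^{-(N-\alpha)}$), one checks $J\in C^1(H,\mathbb{R})$ with weak solutions of (\ref{aa}) as its critical points, and $\langle J'(u),u\rangle=\|u\|_H^p-\int_{\mathbb{Z}^N}(R_\alpha*F(u))f(u)u\,d\mu$. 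I would first record the behaviour of the fibering map $t\mapsto J(tu)$: by $(f_1)$--$(f_3)$ it is positive for small $t>0$ and tends to $-\infty$, and by the monotonicity hypothesis $(f_4)$ it has a unique maximum point $t_u>0$; hence every ray through the origin meets the Nehari manifold $\mathcal{N}=\{u\in H\setminus\{0\}:\langle J'(u),u\rangle=0\}$ exactly once and
$$c:=\inf_{\mathcal{N}}J=\inf_{u\in H\setminus\{0\}}\max_{t>0}J(tu)>0,$$
the strict positivity following from the embedding estimates.

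Next I would take a minimizing sequence $\{u_n\}\subset\mathcal{N}$, $J(u_n)\to c$, and show it is bounded in $H$. This is where $(f_3)$ enters: for $u\in\mathcal{N}$,
$$J(u)=J(u)-\frac1\theta\langle J'(u),u\rangle=\Big(\frac1p-\frac1\theta\Big)\|u\|_H^p+\int_{\mathbb{Z}^N}\big(R_\alpha*F(u)\big)\Big(\frac1\theta f(u)u-\frac12 F(u)\Big)\,d\mu,$$
and since $\theta>p$, $R_\alpha\ge0$, $F\ge0$ and $\tfrac1\theta f(t)t-\tfrac12F(t)\ge0$ by $(f_3)$, the last integral is nonnegative, so $(\tfrac1p-\tfrac1\theta)\|u_n\|_H^p\le J(u_n)=c+o(1)$ and $\{u_n\}$ is bounded. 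A dual computation on $\mathcal{N}$ using $(f_1)$--$(f_2)$ and the HLS estimate yields a uniform lower bound $\|u_n\|_H\ge\rho>0$.

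The decisive step is compactness, supplied by $(h_3)$. Because $h(x)\to\infty$ as $|x-x_0|\to\infty$, bounded sequences in $H$ are tight: given $\varepsilon>0$ choose $R$ with $h(x)\ge 1/\varepsilon$ for $|x-x_0|>R$, so $\sum_{|x-x_0|>R}|u_n|^p\le\varepsilon\|u_n\|_H^p$ uniformly, while on the finite set $\{|x-x_0|\le R\}$ convergence is just pointwise. Combined with $\ell^p\hookrightarrow\ell^q$ ($q\ge p$) on $\mathbb{Z}^N$, this gives a subsequence with $u_n\rightharpoonup u$ in $H$ and $u_n\to u$ strongly in every $\ell^q$, $q\ge p$; in other words the embedding $H\hookrightarrow\ell^q$ is compact. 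Thus $F(u_n)\to F(u)$ in the $\ell^s$-spaces dictated by $(f_2)$ and the HLS exponent relation $\frac1{s_1}+\frac1{s_2}=\frac{N+\alpha}{N}$ (the condition $\tau>\frac{(N+\alpha)p}{2N}$ is exactly what keeps these exponents admissible), and both nonlocal functionals $\int(R_\alpha*F(u_n))F(u_n)$ and $\int(R_\alpha*F(u_n))f(u_n)u_n$ pass to their limits.

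Finally I would identify the limit as the ground state. If $u=0$, the nonlocal term vanishes in the limit, forcing $\|u_n\|_H\to0$ against $\|u_n\|_H\ge\rho$; hence $u\neq0$. Projecting onto $\mathcal{N}$ via $t_u$ and using weak lower semicontinuity of $\|\cdot\|_H^p$ together with the convergence of the nonlocal terms, one obtains $c\le J(t_u u)\le\liminf_n J(u_n)=c$, whence $t_u=1$, $u\in\mathcal{N}$ and $J(u)=c$. Since $(f_4)$ makes $\mathcal{N}$ a natural constraint, the constrained minimizer is a free critical point of $J$, i.e. a ground state solution of (\ref{aa}). I expect the genuine difficulty to be concentrated in the compactness and convergence step: establishing the compact embedding $H\hookrightarrow\ell^q$ from the coercivity $(h_3)$ and, above all, passing to the limit in the doubly nonlocal Choquard terms, where the decay of $R_\alpha$, the discrete HLS inequality and the subcriticality encoded in $(f_2)$ must be combined to control the convolution tails; the remaining Nehari-manifold arguments are comparatively routine.
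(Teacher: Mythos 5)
Your strategy (direct minimization of $J$ on the Nehari manifold, with compactness supplied by the coercivity $(h_3)$) differs from the paper's, which verifies the mountain pass geometry, proves the $(PS)_c$ condition for every level using the compact embedding $H\hookrightarrow\ell^q$, applies the mountain pass theorem, and only then identifies the mountain pass level with $\inf_{\mathcal{M}}J$ via Corollary \ref{jb}. Most of your intermediate steps are sound and correspond to lemmas already in the paper: the fibering-map analysis and positivity of $c$ (Lemma \ref{lj}), boundedness of minimizing sequences via $(f_3)$, tightness and compact embedding from $(h_3)$, nonvanishing of the weak limit from the uniform lower bound $\|u_n\|\geq\eta$, and the passage to the limit in the doubly nonlocal terms via the discrete HLS inequality.

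The genuine gap is your final sentence: ``since $(f_4)$ makes $\mathcal{N}$ a natural constraint, the constrained minimizer is a free critical point of $J$.'' Under the standing hypotheses $f$ is only continuous (this is all $(f_1)$ gives), so $u\mapsto\langle J'(u),u\rangle$ is merely continuous, $\mathcal{N}=\mathcal{M}$ is not a $C^1$ submanifold of $H$, and the Lagrange-multiplier argument that normally upgrades a constrained minimizer to a free critical point is unavailable. The paper flags exactly this obstruction at the start of Section 3 (``since we only assume that $f$ is continuous, $\mathcal{M}$ is not a $C^1$-manifold, so we cannot use the Ekeland variational principle on $\mathcal{M}$ directly'') and circumvents it either by the Szulkin--Weth reduction to the unit sphere (Lemma \ref{lgk}, used for Theorem \ref{t2}) or by the mountain pass theorem (used for Theorem \ref{t1}). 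To close your argument you would need one of these devices: for instance, apply Ekeland's principle to the $C^1$ functional $\Psi=J\circ m$ on the sphere $S$ to produce a $(PS)_c$ sequence and then invoke your compactness, or insert a quantitative deformation argument showing that a minimizer $u\in\mathcal{M}$ with $J'(u)\neq 0$ could be pushed strictly below $c$ using the strict maximality of $t\mapsto J(tu)$ at $t=1$ guaranteed by $(f_4)$. As written, the step you dismiss as ``comparatively routine'' is precisely the one that the weak regularity assumption on $f$ makes nontrivial.
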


This paper is organized as follows. In Section 2, we introduce some basic results on graphs. In Section 3, we prove Theorem \ref{t2} by the method of generalized Nehari manifold. In Section 4, we prove Theorem \ref{t1} by the mountain pass theorem and Nehari manifold.

\section{Preliminaries} 
In this section, we state some basic results on graphs. Let $G=(V, E)$ be a connected, locally finite graph, where $V$ denotes the vertex set and $E$ denotes the edge set. We call vertices $x$ and $y$ neighbors, denoted by $x \sim y$, if there exists an edge connecting them, i.e. $(x, y) \in E$. For any $x,y\in V$, the distance $d(x,y)$ is defined as the minimum number of edges connecting $x$ and $y$, namely
$$d(x,y)=\inf\{k:x=x_0\sim\cdots\sim x_k=y\}.$$
Let $B_{r}(a)=\{x\in V: d(x,a)\leq r\}$ be the closed ball of radius $r$ centered at $a\in V$. For brevity, we write $B_{r}:=B_{r}(0)$.

In this paper, we consider, the natural discrete model of the Euclidean space, the integer lattice graph.  The $N$-dimensional integer lattice graph, denoted by $\mathbb{Z}^N$, consists of the set of vertices $V=\mathbb{Z}^N$ and the set of edges $E=\{(x,y): x,\,y\in\mathbb{Z}^N,\,\underset {{i=1}}{\overset{N}{\sum}}|x_{i}-y_{i}|=1\}.$
In the sequel, we denote $|x-y|:=d(x,y)$ on the lattice graph $\mathbb{Z}^N$.

Let $C(\mathbb{Z}^N)$ be the set of all functions on $\mathbb{Z}^N$. For $u,v \in C(\mathbb{Z}^N)$, we define the gradient form $\Gamma$ as
$$
\Gamma(u, v)(x)=\frac{1}{2} \sum\limits_{y \sim x}(u(y)-u(x))(v(y)-v(x)):=\nabla u\nabla v.
$$
We write $\Gamma(u)=\Gamma(u, u)$ and denote the length of the gradient as
$$
|\nabla u|(x)=\sqrt{\Gamma(u)(x)}=\left(\frac{1}{2} \sum\limits_{y \sim x}(u(y)-u(x))^{2}\right)^{\frac{1}{2}}.
$$
Moreover, for $p\geq 2$, the $p$-Laplacian of $u\in C(\mathbb{Z}^N)$ is given by
$$
\Delta_{p} u(x)=\frac{1}{2} \sum_{y \sim x}\left(|\nabla u|^{p-2}(y)+|\nabla u|^{p-2}(x)\right)(u(y)-u(x)).
$$ Clearly, for $p=2$, we get the usual Laplacian on lattice graphs.

The space $\ell^{p}(\mathbb{Z}^N)$ is defined as $
\ell^{p}(\mathbb{Z}^N)=\left\{u \in C(\mathbb{Z}^N):\|u\|_{p}<\infty\right\},
$ where
$$
\|u\|_{p}= \begin{cases}\left(\sum\limits_{x \in \mathbb{Z}^N}|u(x)|^{p}\right)^{\frac{1}{p}}, &  1 \leq p<\infty, \\ \sup\limits_{x \in \mathbb{Z}^N}|u(x)|, & p=\infty.\end{cases}
$$
For $u\in C(\mathbb{Z}^N)$, in the following, we always write $$
\int_{\mathbb{Z}^N} f(x)\,d \mu=\sum\limits_{x \in \mathbb{Z}^N} f(x),
$$
where $\mu$ is the counting measure in $\mathbb{Z}^N$. 

Let $C_{c}(\mathbb{Z}^N)$ be the set of all functions on $\mathbb{Z}^N$ with finite support. Let $W^{1,p}(\mathbb{Z}^N)$ be the completion of $C_c(\mathbb{Z}^N)$ with respect to the norm
$$
\|u\|_{W^{1, p}(\mathbb{Z}^N)}=\left(\int_{\mathbb{Z}^N}\left(|\nabla u|^{p}+|u|^{p}\right) d \mu\right)^{\frac{1}{p}}.
$$ For $p>1$, $W^{1, p}(\mathbb{Z}^N)$ is a reflexive Banach space, see \cite{HLW}. Now we introduce a subspace
$$
H=\left\{u \in W^{1, p}(\mathbb{Z}^N): \int_{\mathbb{Z}^N} h(x)|u|^{p} d \mu<\infty\right\}
$$
equipped with the norm
$$
\|u\|=\left(\int_{\mathbb{Z}^N}\left(|\nabla u|^{p}+h(x)|u|^{p}\right)\,d \mu\right)^{\frac{1}{p}}.
$$
The space $H$ is a Hilbert space with the inner product
$$
(u, v)=\int_{\mathbb{Z}^N}\left(|\nabla u|^{p-2} \nabla u\nabla v+ h(x)|u|^{p-2} u v\right)\, d \mu.
$$
By $(h_1)$, we have $$\|u\|_p^p\leq \frac{1}{h_0}\int_{\mathbb{Z}^N} h(x)u^p(x)\,d\mu\leq \frac{1}{h_0}\|u\|^p.$$ Hence for any $u \in H$ and $q\geq p$, there holds
\begin{equation}\label{ac}
\|u\|_{q} \leq \|u\|_{p}\leq C \|u\|.
\end{equation}

The energy functional $J: H\rightarrow\R$ related to the equation (\ref{aa}) is given by
$$
J(u)=\frac{1}{p}\int_{\mathbb{Z}^N}\left(|\nabla u|^{p}+h(x)|u|^{p}\right) \,d\mu-\frac{1}{2}\int_{\mathbb{Z}^N}(R_\alpha \ast F(u))F(u)\,d\mu.
$$
Since $p\geq 2$, by the discrete HLS inequality (see Lemma \ref{lm1} below), one gets easily that $J\in C^{1}(H,\mathbb{R})$ and, for $\phi\in H$,
\begin{eqnarray*}
\left\langle J^{\prime}(u), \phi\right\rangle=\int_{\mathbb{Z}^N}\left(|\nabla u|^{p-2}\nabla u \nabla \phi+h|u|^{p-2} u \phi\right) \,d\mu-\int_{\mathbb{Z}^N}(R_\alpha \ast F(u))f(u)\phi\,d\mu.
\end{eqnarray*}

\begin{df}
$u \in H$ is a weak solution to the equation (\ref{aa}) if $u$ is a critical point of the functional $J$, i.e. $J'(u)=0$.
We say that $u$ is a ground state solution to the equation (\ref{aa}) if
$u$ is a nontrivial critical point of the functional $J$ such that
$$
J(u)=\inf\limits_{v\in \mathcal{M}} J(v):=c,
$$
where $\mathcal{M}=\{v\in H\backslash\{0\}: \langle J'(v),v\rangle=0\}$ is the corresponding Nehari manifold. 	
\end{df}

Next, we give some basic lemmas that useful in this paper. The first one is about the formulas of integration by parts, see \cite{HS}.

\begin{lm}\label{l0} 
Assume that $u \in W^{1, p}(\mathbb{Z}^N)$. Then for any $v \in C_{c}(\mathbb{Z}^N)$, we have
$$
\int_{\mathbb{Z}^N}|\nabla u|^{p-2} \nabla u \nabla v\,d \mu=-\int_{\mathbb{Z}^N}\left(\Delta_{p} u\right) v \,d \mu .
$$
\end{lm}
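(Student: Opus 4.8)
The plan is to prove this by a discrete summation-by-parts (symmetrization) argument, the graph analogue of integration by parts. The guiding observation is that since $v\in C_{c}(\mathbb{Z}^N)$ has finite support, every sum below effectively ranges over the finite set $\mathrm{supp}(v)$ together with its neighbors (each vertex of $\mathbb{Z}^N$ has exactly $2N$ of them). Hence all sums are finite, any relabeling or rearrangement of the summation indices is automatically justified, and no convergence issue arises; the only fact needed about $u\in W^{1,p}(\mathbb{Z}^N)$ is that $|\nabla u|(x)$ is finite at each vertex, which is immediate from the definition of the gradient.

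Writing $w(x):=|\nabla u|^{p-2}(x)$ for brevity and unwinding the definition of the gradient form, the left-hand side is
\begin{equation*}
\int_{\mathbb{Z}^N}|\nabla u|^{p-2}\nabla u\nabla v\,d\mu=\frac{1}{2}\sum_{x\in\mathbb{Z}^N}\sum_{y\sim x}w(x)\,(u(y)-u(x))(v(y)-v(x)).
\end{equation*}
First I would split the factor $v(y)-v(x)$, separating the double sum into a $v(y)$-part and a $v(x)$-part. In the $v(y)$-part I would swap the dummy indices $x\leftrightarrow y$; since the adjacency relation $\sim$ is symmetric, one has $\sum_{x}\sum_{y\sim x}a(x,y)=\sum_{x}\sum_{y\sim x}a(y,x)$ for any $a$, so this relabeling is legitimate. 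It sends $w(x)\mapsto w(y)$, $v(y)\mapsto v(x)$, and $u(y)-u(x)\mapsto u(x)-u(y)=-(u(y)-u(x))$. Combining the two parts then yields
\begin{equation*}
\int_{\mathbb{Z}^N}|\nabla u|^{p-2}\nabla u\nabla v\,d\mu=-\frac{1}{2}\sum_{x\in\mathbb{Z}^N}v(x)\sum_{y\sim x}\bigl(w(y)+w(x)\bigr)(u(y)-u(x)).
\end{equation*}
Recognizing the inner expression as $\Delta_{p}u(x)=\tfrac{1}{2}\sum_{y\sim x}\bigl(|\nabla u|^{p-2}(y)+|\nabla u|^{p-2}(x)\bigr)(u(y)-u(x))$ gives exactly $-\sum_{x}v(x)\Delta_{p}u(x)=-\int_{\mathbb{Z}^N}(\Delta_{p}u)v\,d\mu$, as claimed.

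The only genuinely delicate point is the index swap, so I would state the symmetry identity explicitly and invoke the finite support of $v$ to guarantee that every sum in the manipulation is a finite sum. Beyond that the argument is a bounded, purely algebraic rearrangement; there is no real analytic obstacle. The entire content is the symmetrization step, which is precisely what produces the symmetric weight $w(y)+w(x)$ characteristic of the graph $p$-Laplacian and explains why the factor $\tfrac12$ in the definition of $\Delta_{p}u$ is the right normalization for the formula to hold.
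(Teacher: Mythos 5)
Your proof is correct and complete: the symmetrization $x\leftrightarrow y$ over the symmetric adjacency relation, justified by the finite support of $v$ (so that every sum ranges over $\mathrm{supp}(v)$ and its neighbors), is exactly the standard discrete integration-by-parts argument, and your bookkeeping correctly produces the symmetric weight $|\nabla u|^{p-2}(y)+|\nabla u|^{p-2}(x)$ matching the paper's definition of $\Delta_p u$. The paper itself does not prove this lemma but simply cites Han--Shao \cite{HS}, and your argument is essentially the one given there, so nothing further is needed.
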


\begin{rem}
 If $u$ is a weak solution to the equation (\ref{aa}), then by Lemma \ref{l0}, one can get easily that $u$ is a pointwise solution to the equation (\ref{aa}).   
\end{rem}

The following discrete Hardy-Littlewood-Sobolev (HLS for abbreviation) inequality is  well-known, see \cite{LW1,W1}.
\begin{lm}\label{lm1}

Let $0<\alpha <N,\,1<r,s<\infty$ and $\frac{1}{r}+\frac{1}{s}+\frac{N-\alpha}{N}=2$. Then we have the discrete
HLS inequality
\begin{equation}\label{bo}
\int_{\mathbb{Z}^N}(R_\alpha\ast u)(x)v(x)\,d\mu\leq C_{r,s,\alpha,N}\|u\|_r\|v\|_s,\quad u\in \ell^r(\mathbb{Z}^N),\,v\in \ell^s(\mathbb{Z}^N).
\end{equation}
And an equivalent form is
\begin{eqnarray}\label{p1}
\|R_\alpha\ast u\|_{\frac{Nr}{N-\alpha r}}\leq C_{r,\alpha,N}\|u\|_r,\quad u\in \ell^r(\mathbb{Z}^N),
\end{eqnarray}
where $1<r<\frac{N}{\alpha}$.
\end{lm}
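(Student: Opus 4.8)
The plan is to deduce both \eqref{bo} and \eqref{p1} from the classical (continuous) Hardy--Littlewood--Sobolev inequality on $\mathbb{R}^N$, after first reducing the two displayed forms to a single mapping estimate. I would begin by recording the exponent arithmetic: writing $q=\frac{Nr}{N-\alpha r}$ for $1<r<N/\alpha$, one has $\frac1q=\frac1r-\frac{\alpha}{N}$, while the hypothesis $\frac1r+\frac1s+\frac{N-\alpha}{N}=2$ gives $\frac1s=1-\frac1r+\frac{\alpha}{N}=1-\frac1q$, so $s=q'$ is the Hölder conjugate of $q$. Consequently, once \eqref{p1} is known, \eqref{bo} follows at once from Hölder's inequality,
$$\int_{\mathbb{Z}^N}(R_\alpha\ast u)v\,d\mu\leq\|R_\alpha\ast u\|_q\,\|v\|_{q'}\leq C\|u\|_r\|v\|_s,$$
and conversely \eqref{p1} follows from \eqref{bo} by duality in $\ell^q$. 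Thus it suffices to establish the single-function estimate \eqref{p1}.

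Next I would reduce the discrete kernel to the genuine Riesz kernel. The two facts quoted above --- that $R_\alpha$ has no singularity on the diagonal and that $R_\alpha(x,y)\simeq|x-y|^{-(N-\alpha)}$ for $|x-y|\gg1$ (cf. \cite{MC}) --- combine to give a uniform pointwise bound
$$0\leq R_\alpha(x,y)\leq C\,(1+|x-y|)^{-(N-\alpha)},\qquad x,y\in\mathbb{Z}^N,$$
since the near-diagonal values are bounded and the off-diagonal values match the Riesz decay. Because $N-\alpha>0$, it therefore suffices to prove the $\ell^r\to\ell^q$ boundedness of the convolution operator with kernel $(1+|x-y|)^{-(N-\alpha)}$.

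To prove that boundedness I would pass to the continuum. Given $u\in\ell^r(\mathbb{Z}^N)$, let $\tilde u$ be its piecewise-constant extension to $\mathbb{R}^N$, equal to $u(x)$ on the unit cube $Q_x=x+[-\tfrac12,\tfrac12)^N$; then $\|\tilde u\|_{L^r(\mathbb{R}^N)}=\|u\|_{\ell^r}$. For $\xi\in Q_x$ and $\eta\in Q_y$ with $x\neq y$ one has $|\xi-\eta|\leq|x-y|+\sqrt N\leq C(1+|x-y|)$, hence $|\xi-\eta|^{-(N-\alpha)}\gtrsim(1+|x-y|)^{-(N-\alpha)}$, so the off-diagonal part of the discrete sum is dominated pointwise on $Q_x$ by the continuous Riesz potential of $|\tilde u|$, while the single diagonal term $y=x$ is absorbed by the trivial embedding $\|u\|_q\leq\|u\|_r$ (valid since $q>r$). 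Applying the classical Hardy--Littlewood--Sobolev inequality to the Riesz potential of $|\tilde u|$ on $\mathbb{R}^N$, and comparing its $L^q(\mathbb{R}^N)$ norm with the $\ell^q(\mathbb{Z}^N)$ norm of the step function via $\inf_{Q_x}(\,\cdot\,)^q\leq\int_{Q_x}(\,\cdot\,)^q$, yields \eqref{p1}.

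The main obstacle is exactly this discrete-to-continuous comparison: one must verify that replacing the sum over $y$ by an integral over $\mathbb{R}^N$ costs only a constant factor, which forces a clean separation of the diagonal block (where the continuous Riesz kernel is singular but the discrete kernel is bounded) from the off-diagonal region, together with a check that averaging over unit cubes respects the monotone decay of $(1+|x-y|)^{-(N-\alpha)}$. Alternatively, one can stay entirely on $\mathbb{Z}^N$ and argue in the original Hardy--Littlewood style: decompose the kernel over the dyadic annuli $\{2^j\le|x-y|<2^{j+1}\}$, bound each annulus by Hölder's inequality, and sum after interpolating between the weak-type endpoints; but the comparison route keeps the bookkeeping shortest.
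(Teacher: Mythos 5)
Your argument is essentially correct, but note that the paper does not prove this lemma at all: it is quoted as ``well-known'' with a citation to \cite{LW1,W1}, so there is no in-text proof to compare against. What you supply is a genuine, self-contained transference proof, and it is the standard one for such discrete HLS statements: (a) the exponent bookkeeping $s=q'$ with $q=\frac{Nr}{N-\alpha r}$, reducing \eqref{bo} and \eqref{p1} to each other by H\"older and duality, is right (and the constraint $1<s<\infty$ indeed forces $1<r<\frac{N}{\alpha}$ automatically); (b) the kernel bound $|R_\alpha(x,y)|\leq C(1+|x-y|)^{-(N-\alpha)}$ does follow from the quoted facts, since $R_\alpha$ depends only on $x-y$, is given by a convergent integral (as $\mu^{-\alpha/2}$ is integrable for $\alpha<N$), and only finitely many lattice differences lie in any bounded region; (c) the cube-extension comparison $\inf_{Q_x}I_\alpha(|\tilde u|)\gtrsim\sum_{y\neq x}(1+|x-y|)^{-(N-\alpha)}|u(y)|$ together with $\sum_x\inf_{Q_x}(\cdot)^q\leq\int_{\mathbb{R}^N}(\cdot)^q$ and the separate treatment of the diagonal via $\|u\|_q\leq\|u\|_r$ is exactly the right way to import the continuous HLS inequality. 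Two small touch-ups: you assert $R_\alpha\geq 0$, which is neither obvious from the Fourier formula nor needed --- just work with $|R_\alpha|$ and $|u|,|v|$ throughout; and the paper's $|x-y|$ is the graph ($\ell^1$) distance, which is comparable to the Euclidean one, so your cube estimates go through unchanged but deserve a one-line remark. With those cosmetic fixes your proof stands on its own, which is arguably more than the paper offers for this statement.
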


\begin{lm}\label{lhf}
Let $(f_1)$-$(f_3)$ hold. If $u\in H\backslash\{0\}$, then for $t\geq 1$, we have
\begin{equation}\label{95}
 \int_{\mathbb{Z}^N}(R_\alpha \ast F(tu))F(tu)\,d\mu\geq t^\theta\int_{\mathbb{Z}^N}(R_\alpha \ast F(u))F(u)\,d\mu.   
\end{equation}
\end{lm}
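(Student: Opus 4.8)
The plan is to reduce the integral inequality to a one-variable scaling estimate for the primitive $F$, and then to lift that estimate through the convolution by exploiting the nonnegativity of both $F$ and the kernel $R_\alpha$.

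The first and central step is the pointwise claim that, for every real number $v$ and every $t\ge 1$,
$$F(tv)\ge t^{\theta/2}F(v).$$
To prove it I would fix $v$ and consider the auxiliary function $\psi(t)=t^{-\theta/2}F(tv)$ on $(0,\infty)$. Since $f\in C(\mathbb R,\mathbb R)$ by $(f_1)$, the primitive $F$ is $C^1$ with $F'=f$, so $\psi$ is differentiable and a direct computation gives
$$\psi'(t)=t^{-\theta/2-1}\Big(f(tv)\,(tv)-\tfrac{\theta}{2}\,F(tv)\Big).$$
Here the factor $\tfrac{\theta}{2}$ is exactly what $(f_3)$ controls: applying the inequality $\theta F(s)\le 2f(s)s$ at $s=tv$ yields $f(tv)(tv)-\tfrac{\theta}{2}F(tv)\ge 0$, hence $\psi'(t)\ge 0$ for all $t>0$. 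Thus $\psi$ is nondecreasing, so $\psi(t)\ge\psi(1)$ for $t\ge 1$, which is precisely the claimed inequality. The case $v=0$ is trivial since $F(0)=0$.

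The second step assembles the convolution integral. Writing the left-hand side of (\ref{95}) as the double sum
$$\int_{\mathbb{Z}^N}(R_\alpha\ast F(tu))F(tu)\,d\mu=\sum_{x,y\in\mathbb Z^N}R_\alpha(x,y)\,F(tu(x))\,F(tu(y)),$$
I would use that $R_\alpha(x,y)\ge 0$ (it is nonnegative as the Green's function behaving like the Riesz potential, as recalled after (\ref{aa})) together with $F\ge 0$ from $(f_3)$ and the pointwise estimate applied at $v=u(x)$ and at $v=u(y)$. Each factor $F(tu(\cdot))$ is bounded below by $t^{\theta/2}F(u(\cdot))\ge 0$, so termwise monotonicity of the nonnegative double sum produces the factor $t^{\theta/2}\cdot t^{\theta/2}=t^\theta$ multiplying the corresponding sum for $u$, which is the right-hand side of (\ref{95}).

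The step I expect to require the most care is the bookkeeping that makes the two half-powers $t^{\theta/2}$ combine to $t^\theta$: this is where the factor $2$ in $(f_3)$ (which in turn matches the coefficient $\tfrac12$ in the functional $J$) is essential, and one must keep both $F\ge 0$ and $R_\alpha\ge 0$ in force so that replacing each factor by a smaller nonnegative quantity only decreases the sum. The convergence of the double sum itself is not a serious issue, since the discrete HLS inequality (Lemma \ref{lm1}) together with the growth bound (\ref{ae}) guarantees that the integral in (\ref{95}) is finite for $u\in H$.
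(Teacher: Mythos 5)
Your proof is correct, but it takes a genuinely different route from the paper's. The paper works at the level of the full functional: it sets $g(s)=\frac{1}{2}\int_{\mathbb{Z}^N}(R_\alpha\ast F(su))F(su)\,d\mu$, differentiates in $s$, applies $(f_3)$ once to the bilinear integral to obtain the differential inequality $g'(s)\geq \frac{\theta}{s}g(s)$, and integrates from $1$ to $t$ to conclude $g(t)\geq t^{\theta}g(1)$. You instead run the same Gronwall-type monotonicity argument at the scalar level, proving the stronger pointwise bound $F(tv)\geq t^{\theta/2}F(v)$, and then lift it through the double sum using $F\geq 0$ (from $(f_3)$) and $R_\alpha\geq 0$, the two half-powers combining to $t^{\theta}$. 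Your version buys two things: it isolates a reusable pointwise consequence of $(f_3)$, and it sidesteps the need to justify differentiating the infinite double sum in $s$ (which the paper's computation of $g'$ tacitly assumes). The price is that you must invoke the positivity of the kernel $R_\alpha$ explicitly; note, though, that the paper's own proof also needs $(R_\alpha\ast F(su))(x)\geq 0$ to pass from $f(su)\,su$ to $\frac{\theta}{2}F(su)$ inside the integral, so this is not an additional hypothesis but merely one made visible. Both arguments use $(f_3)$ in exactly the same way; the choice is essentially whether to apply it before or after summing against the kernel.
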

\begin{proof}
For $s>0$, let 
$$g(s)=\frac{1}{2}\int_{\mathbb{Z}^N}\left(R_\alpha \ast F(su)\right)F(su)\,d\mu.$$
By $(f_3)$, we get that
\begin{equation}\label{93}
\begin{aligned}
 g'(s)=&\int_{\mathbb{Z}^N}\left(R_\alpha \ast F(su)\right)f(su)u\,d\mu \\=&\frac{1}{s}\int_{\mathbb{Z}^N}\left(R_\alpha \ast F(su)\right)f(su)su\,d\mu\\ \geq& \frac{\theta}{2s}\int_{\mathbb{Z}^N}\left(R_\alpha \ast F(su)\right)F(su)\,d\mu\\=& \frac{\theta}{s} g(s).
 \end{aligned}
\end{equation}

Clearly for $t=1$, the result (\ref{95}) holds.
For $t>1$, integrating the inequality (\ref{93}) from $1$ to $t$, one gets easily that
$$g(t)\geq  t^\theta g(1),$$
which is equivalent to 
$$\int_{\mathbb{Z}^N}(R_\alpha \ast F(tu))F(tu)\,d\mu\geq t^\theta\int_{\mathbb{Z}^N}(R_\alpha \ast F(u))F(u)\,d\mu. $$
\end{proof}

Finally, we give two compactness results for $H$. The first one is a discrete Lions lemma, which plays a crucial role in our first theorem.
\begin{lm}\label{lgh}
 Let $p\leq q<\infty$. If $\left\{u_{n}\right\}$ is bounded in $H$ and
$$
\left\|u_{n}\right\|_{\infty} \rightarrow 0,\quad n \rightarrow\infty,
$$
then, for any $p<q<\infty$,
$$
u_{n} \rightarrow 0,\quad  \text { in } \ell^{q}(\mathbb{Z}^N) \text {. }
$$   
\end{lm}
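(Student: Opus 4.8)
The plan is to exploit an interpolation argument that leverages the hypothesis $\|u_n\|_\infty \to 0$ together with the uniform bound on $\{u_n\}$ in $H$. First I would observe that, by the embedding (\ref{ac}), boundedness of $\{u_n\}$ in $H$ gives a uniform bound on $\|u_n\|_p$, say $\|u_n\|_p \leq M$ for all $n$. The key idea is that for any exponent $q$ strictly between $p$ and $\infty$, the $\ell^q$-norm can be controlled by the $\ell^p$-norm and the $\ell^\infty$-norm through the elementary pointwise inequality $|u_n(x)|^q = |u_n(x)|^{q-p}|u_n(x)|^p \leq \|u_n\|_\infty^{q-p}|u_n(x)|^p$, valid since $q > p$. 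Summing over $x \in \mathbb{Z}^N$ yields
\begin{equation*}
\|u_n\|_q^q = \sum_{x\in\mathbb{Z}^N}|u_n(x)|^q \leq \|u_n\|_\infty^{q-p}\sum_{x\in\mathbb{Z}^N}|u_n(x)|^p = \|u_n\|_\infty^{q-p}\,\|u_n\|_p^p.
\end{equation*}

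From here the conclusion is immediate: using $\|u_n\|_p \leq M$ we obtain $\|u_n\|_q^q \leq M^p\,\|u_n\|_\infty^{q-p}$, and since $q-p > 0$ and $\|u_n\|_\infty \to 0$ by hypothesis, the right-hand side tends to $0$ as $n \to \infty$. Therefore $\|u_n\|_q \to 0$, i.e. $u_n \to 0$ in $\ell^q(\mathbb{Z}^N)$, which is exactly the desired claim for every $q$ with $p < q < \infty$.

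I expect this argument to be essentially routine rather than obstacle-laden, which reflects the discrete nature of the setting: on $\mathbb{Z}^N$ with the counting measure, the $\ell^p$ spaces are nested (smaller norm for larger exponent, as already recorded in (\ref{ac})), and there is no issue of vanishing mass escaping to a continuum of scales as in the classical Lions lemma on $\mathbb{R}^N$. The only point requiring minor care is the role of the lower exponent: the statement quantifies over $p \leq q$ at the outset but concludes for $p < q < \infty$, so I would simply note that the interpolation inequality above uses $q > p$ strictly (the factor $\|u_n\|_\infty^{q-p}$ must carry a positive power to force decay), and present the estimate for that range. No compactness or subsequence extraction is actually needed here; the convergence is genuine along the full sequence, driven purely by the sup-norm decay and the uniform $\ell^p$ bound.
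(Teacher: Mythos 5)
Your argument is correct and coincides with the paper's own proof: both use the bound $\|u_n\|_p\leq C$ coming from (\ref{ac}) and the interpolation inequality $\|u_n\|_q^q\leq \|u_n\|_p^p\|u_n\|_\infty^{q-p}$ to conclude. The only difference is that you spell out the elementary pointwise derivation of that inequality, which the paper leaves implicit.
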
 

\begin{proof}
 It follows from (\ref{ac}) that $\{u_n\}$ is bounded in $\ell^{p}(\mathbb{Z}^N)$. Hence, for $p<q<\infty$, by an interpolation inequality, we get that
$$
\left\|u_{n}\right\|_{q}^{q} \leq\left\|u_{n}\right\|_{p}^{p}\left\|u_{n}\right\|_{\infty}^{q-p}\rightarrow 0,\quad n\rightarrow\infty.
$$
\end{proof}

The second one is a well-known compact embedding result, we refer the readers to \cite{ZZ}.

\begin{lm}\label{lgg}
Let $(h_1)$ and $(h_2)$ hold. Then for any $q\geq p\geq 2$, $H$ is continuously embedded into $\ell^{q}(\mathbb{Z}^N)$. That is, there exists a constant $C$ depending only on $q$ such that, for any $u \in H$,
$$
\|u\|_{q} \leq C\|u\|.
$$
Moreover, for any bounded sequence $\left\{u_{n}\right\} \subset H$, there exists $u \in H$ such that, up to a subsequence, 
$$
\begin{cases}u_{n} \rightharpoonup u, & \text { in } H, \\ u_{n} \rightarrow u, & \text{pointwise~in}~ \mathbb{Z}^N, \\ u_{n} \rightarrow u, & \text { in } \ell^{q}(\mathbb{Z}^N) .\end{cases}
$$ 
\end{lm}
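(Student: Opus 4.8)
The continuous embedding is already contained in the chain (\ref{ac}): for $q\ge p$ one has $\|u\|_q\le\|u\|_p\le C\|u\|$, since on a discrete set $\ell^p\hookrightarrow\ell^q$ with norm at most one whenever $q\ge p$. Thus the real content is the compactness assertion, and my plan is to build the limit $u$ by hand through a diagonal argument and then upgrade the convergence one mode at a time.

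First I would produce the pointwise limit. At each fixed vertex $|u_n(x)|\le\|u_n\|_\infty\le\|u_n\|_p\le C\|u_n\|$ is uniformly bounded, so since $\mathbb{Z}^N$ is countable a Cantor diagonal argument yields a subsequence (not relabelled) and a function $u$ with $u_n(x)\to u(x)$ for every $x\in\mathbb{Z}^N$. Applying Fatou's lemma to the nonnegative integrand defining $\|\cdot\|^p$ (note that $|\nabla u_n|(x)\to|\nabla u|(x)$ at each $x$, the defining sum being finite) gives $\|u\|\le\liminf_n\|u_n\|<\infty$, so $u\in H$. For the weak convergence $u_n\rightharpoonup u$ in $H$ I would use that $H$ is reflexive (this can be seen from its uniformly convex $\ell^p$-type norm with $1<p<\infty$, or from the reflexivity of $W^{1,p}(\mathbb{Z}^N)$ noted above), so that a further subsequence converges weakly to some limit; since every evaluation functional $\delta_x:v\mapsto v(x)$ is continuous on $H$, this weak limit must coincide with the pointwise limit $u$.

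The heart of the matter, and the step I expect to be the main obstacle, is the strong convergence $u_n\to u$ in $\ell^q$. Setting $v_n=u_n-u$, we have $v_n\to0$ pointwise and $\|v_n\|\le M$ for some $M$, and it suffices to prove $\|v_n\|_p\to0$, since then $\|v_n\|_q\le\|v_n\|_p\to0$ for every $q\ge p$. I would split the sum at a large ball $B_R(x_0)$. On its complement I exploit the growth of $h$: given $\varepsilon>0$, choose $R$ so large that $h(x)\ge\varepsilon^{-1}$ for $|x-x_0|>R$, whence
\begin{equation*}
\sum_{|x-x_0|>R}|v_n(x)|^p\le\varepsilon\sum_{|x-x_0|>R}h(x)|v_n(x)|^p\le\varepsilon\|v_n\|^p\le\varepsilon M^p.
\end{equation*}
On the finite ball, $\sum_{|x-x_0|\le R}|v_n(x)|^p$ is a finite sum of terms each tending to $0$, hence tends to $0$. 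Combining the two estimates gives $\limsup_n\|v_n\|_p^p\le\varepsilon M^p$, and letting $\varepsilon\to0$ yields $\|v_n\|_p\to0$, which finishes the proof.

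I would flag one point about the hypotheses: the decisive tail estimate above requires $h(x)\to\infty$ as $|x-x_0|\to\infty$, i.e. the coercivity $(h_3)$, rather than the periodicity $(h_2)$ appearing in the statement. Periodicity alone cannot yield strong $\ell^q$-compactness, as translating a fixed bump by full periods produces a sequence that is bounded in $H$ and converges weakly and pointwise to $0$ while keeping its $\ell^q$-norm fixed. The correct companion assumption for this compact embedding is therefore $(h_3)$, which is precisely the setting in which the lemma is used (Theorem \ref{t1}).
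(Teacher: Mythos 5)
Your proof is correct, and it is worth noting that the paper itself gives no argument here: it simply cites \cite{ZZ} for this ``well-known'' compact embedding. Your self-contained proof is the standard one for such results (continuous embedding from the chain $\|u\|_q\le\|u\|_p\le C\|u\|$; diagonal extraction plus reflexivity for the weak and pointwise limits; Fatou to place $u$ in $H$; and the tail estimate $\sum_{|x-x_0|>R}|v_n|^p\le\varepsilon\|v_n\|^p$ from coercivity of $h$ combined with pointwise convergence on the finite ball), and every step goes through on the lattice. Most importantly, your flag about the hypotheses is right and is the substantive point: the strong $\ell^q$-convergence is \emph{false} under $(h_1)$ and $(h_2)$ alone, by exactly the translated-bump counterexample you give, so the ``$(h_2)$'' in the statement must be read as ``$(h_3)$''. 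This is consistent with how the lemma is actually deployed: it is invoked only in Section 4 (in the proof of the $(PS)_c$ condition, Lemma \ref{ln}, which carries the same misprint) in service of Theorem \ref{t1}, whose hypotheses are $(h_1)$ and $(h_3)$, and the cited source \cite{ZZ} proves the compact embedding under a coercive potential. So your argument not only fills in the omitted proof but also corrects the statement to the form in which it is actually true and used.
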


\
\

\section{Proof of Theorem \ref{t2}}
In this section, we prove the existence of ground state solutions to the equation (\ref{aa}) under the assumptions $(h_1)$ and $(h_2)$ on potential function $h$. 

The condition $(h_2)$ leads to the loss of compactness in $H$. Moreover, since we  only assume that $f$ is continuous, $\mathcal{M}$ is not a $C^1$-manifold. So we cannot use the Ekeland variational principle on $\mathcal{M}$ directly. In order to overcome the difficulties , we shall prove Theorem \ref{t2} by the generalized Nehari manifold.

First, we show some properties of $J$ on the Nehari manifold $\mathcal{M}$ that are useful in our proofs.

\begin{lm}\label{lj}
 Let $\left(h_{1}\right)$ and $\left(f_{1}\right)$-$\left(f_{4}\right)$ hold. Then 
\begin{itemize}
    \item[(i)] for any $u \in H \backslash\{0\}$, there exists a unique $s_{u}>0$ such that $s_{u} u \in \mathcal{M}$ and $J(s_{u} u)=$ $\max\limits_{s>0} J(s u)$;
    
    \item[(ii)] there exists $\eta>0$ such that $\|u\| \geq \eta$ for  $u \in \mathcal{M}$, and hence $c=\inf\limits_{\mathcal{M}}J>0$;

    \item[(iii)] $J$ is bounded from below on $\mathcal{M}$ by a positive constant.


\end{itemize}
  
\end{lm}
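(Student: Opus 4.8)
The plan is to fix $u\in H\backslash\{0\}$ and analyze the fibering map $\psi(s):=J(su)$ on $(0,\infty)$, since
$$
\psi(s)=\frac{s^p}{p}\|u\|^p-\frac12\int_{\mathbb{Z}^N}(R_\alpha\ast F(su))F(su)\,d\mu,
$$
and a direct differentiation gives $\psi'(s)=\frac1s\langle J'(su),su\rangle$, so that $su\in\mathcal{M}$ precisely when $\psi'(s)=0$. All three items then follow from the shape of $\psi$ together with one coercivity-type estimate. I expect part (i) to be the main obstacle, while (ii) and (iii) fall out of a single computation.

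For (i) I would first show $\psi(s)>0$ for small $s>0$. By (\ref{ae}) and the discrete HLS inequality (Lemma \ref{lm1}) with the symmetric exponents $r=\frac{2N}{N+\alpha}\in(1,2)$, the nonlocal term is bounded by $C\big(\varepsilon s^p\|u\|^p+C_\varepsilon s^\tau\|u\|^\tau\big)^2$, using (\ref{ac}) and the fact that $pr\ge p$ (as $N\ge\alpha$) and $\tau r\ge p$ (this is exactly $(f_2)$). Since $\tau>\frac{(N+\alpha)p}{2N}>\frac p2$ forces $2\tau>p$, every term of this bound has order in $s$ strictly higher than $p$, so the leading term $\frac{s^p}{p}\|u\|^p$ dominates and $\psi(s)>0$ near $0$ while $\psi(s)\to0$ as $s\to0^+$. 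Next, for $s\ge1$, Lemma \ref{lhf} gives $\int_{\mathbb{Z}^N}(R_\alpha\ast F(su))F(su)\,d\mu\ge s^\theta\int_{\mathbb{Z}^N}(R_\alpha\ast F(u))F(u)\,d\mu$, whence $\psi(s)\le\frac{s^p}{p}\|u\|^p-\frac{s^\theta}{2}\int_{\mathbb{Z}^N}(R_\alpha\ast F(u))F(u)\,d\mu\to-\infty$, because $\theta>p$ and the base integral is strictly positive for $u\ne0$ (indeed $R_\alpha>0$, $F\ge0$, and integrating $(f_3)$ yields $F(t)\gtrsim|t|^{\theta/2}$, so $F(u)\not\equiv0$; alternatively the diagonal term $R_\alpha(x_0,x_0)F(su(x_0))^2$ already diverges like $s^\theta$ at any $x_0$ with $u(x_0)\ne0$). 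Consequently $\psi$ attains a positive maximum at some interior $s_u>0$ with $\psi'(s_u)=0$, i.e.\ $s_uu\in\mathcal{M}$.

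Uniqueness of $s_u$ is the crux and is exactly where $(f_4)$ is used. Rewriting $\psi'(s)=0$ as $\|u\|^p=s\,g(s)$ with $g(s):=s^{-p}\int_{\mathbb{Z}^N}(R_\alpha\ast F(su))f(su)u\,d\mu$, hypothesis $(f_4)$ says $g$ is strictly increasing, and $g>0$; hence $s\mapsto s\,g(s)$ is strictly increasing on $(0,\infty)$, so $\|u\|^p=s\,g(s)$ has at most one solution. Combined with the existence above, $s_u$ is unique, and being the only critical point of $\psi$ it is the global maximum, giving $J(s_uu)=\max_{s>0}J(su)$.

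For (ii) and (iii) a single estimate suffices. If $u\in\mathcal{M}$ then $\|u\|^p=\int_{\mathbb{Z}^N}(R_\alpha\ast F(u))f(u)u\,d\mu$, and bounding the right-hand side by Lemma \ref{lm1} together with (\ref{ad}), (\ref{ae}) and (\ref{ac}) gives $\|u\|^p\le C\big(\varepsilon\|u\|^p+C_\varepsilon\|u\|^\tau\big)^2$. Dividing by $\|u\|^p$, fixing $\varepsilon$ small and using $2\tau>p$, the right-hand side tends to $0$ as $\|u\|\to0$, which forces $\|u\|\ge\eta$ for some $\eta>0$; this is (ii). Finally, since $(f_3)$ gives $F(t)\le\frac{2}{\theta}f(t)t$ pointwise and $R_\alpha\ast F(u)\ge0$, on $\mathcal{M}$ we obtain
$$
J(u)=\frac1p\|u\|^p-\frac12\int_{\mathbb{Z}^N}(R_\alpha\ast F(u))F(u)\,d\mu\ge\Big(\frac1p-\frac1\theta\Big)\|u\|^p\ge\Big(\frac1p-\frac1\theta\Big)\eta^p>0,
$$
which simultaneously yields $c=\inf_{\mathcal{M}}J>0$ and the positive lower bound claimed in (iii). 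The only delicate points are the power bookkeeping and the strict positivity of the base nonlocal integral in (i), together with the careful invocation of $(f_4)$; the remaining steps are routine applications of the HLS inequality and $(f_3)$.
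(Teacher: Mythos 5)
Your proof is correct and follows essentially the same route as the paper: the same fibering-map analysis with the HLS/growth estimates for small $s$ and Lemma \ref{lhf} for large $s$ in (i), the same use of $(f_4)$ to force uniqueness via monotonicity of the two sides of $\psi'(s)=0$, and the same $(f_3)$-based computation $J(u)\geq(\tfrac1p-\tfrac1\theta)\|u\|^p$ for (ii)--(iii). Your added remarks on the strict positivity of $\int_{\mathbb{Z}^N}(R_\alpha\ast F(u))F(u)\,d\mu$ are, if anything, more careful than the paper, which takes this for granted.
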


\begin{proof}(i) Let $u\in H\backslash\{0\}$ be fixed and $s>0$.
Since $\alpha\in(0,N)$ and $\tau>\frac{(N+\alpha)p}{2N}$,
by (\ref{ae}), (\ref{ac}) and the HLS inequality (\ref{bo}), we get that
\begin{eqnarray}\label{mc}
    \int_{\mathbb{Z}^N}(R_\alpha \ast F(su))F(su)\,d\mu\nonumber&\leq& C\left(\int_{\mathbb{Z}^N} |F(su)|^{\frac{2N}{N+\alpha}}\,d\mu  \right)^{\frac{N+\alpha}{N}}\\&\leq &C\left(\int_{\mathbb{Z}^N} \left(\varepsilon|su|^p+C_\varepsilon|su|^{\tau}\right)^{\frac{2N}{N+\alpha}}\,d\mu  \right)^{\frac{N+\alpha}{N}}\nonumber\\&\leq& \varepsilon s^{2p}\|u\|^{2p}_{\frac{2Np}{N+\alpha}}+C_\varepsilon s^{2\tau}\|u\|^{2\tau}_{\frac{2N\tau}{N+\alpha}}\nonumber\\&\leq& \varepsilon s^{2p}\|u\|^{2p}+C_\varepsilon s^{2\tau}\|u\|^{2\tau}.
\end{eqnarray}
Then we have
\begin{equation}\label{jv}
\begin{aligned}
J(s u) & =\frac{s^{p}}{p}\|u\|^{p}-\frac{1}{2}\int_{\mathbb{Z}^N}(R_\alpha \ast F(su))F(su)\,d\mu \\&\geq\frac{s^{p}}{p}\|u\|^{p}-\varepsilon s^{2p}\|u\|^{2p}-C_\varepsilon s^{2\tau}\|u\|^{2\tau}.
\end{aligned}
\end{equation}
Note that $2\tau>\frac{(N+\alpha)p}{N}>p$. Let $\varepsilon\rightarrow 0^+$, we get easily that $J(s u)>0$ for $s>0$ sufficiently small.

On the other hand, note that $\theta>p$, by (\ref{95}), we obtain that
\begin{eqnarray}\label{md}
 \lim _{s\rightarrow\infty} J(su)\nonumber&=&\lim _{s \rightarrow\infty}\left[\frac{s^{p}}{p}\|u\|^p-\frac{1}{2} \int_{\mathbb{Z}^N}(R_\alpha \ast F(su))F(su)\,d\mu\right]\nonumber\\&\leq&\lim _{s\rightarrow\infty}\left[\frac{s^{p}}{p}\|u\|^p-\frac{s^\theta}{2} \int_{\mathbb{Z}^N}\left(R_\alpha \ast F(u)\right)F(u)\,d\mu\right]\nonumber\\&\rightarrow&-\infty, \quad s \rightarrow \infty.  \end{eqnarray}
Hence $\max\limits_{s>0} J(s u)$ is achieved at some $s_{u}>0$ with $s_{u} u \in \mathcal{M}$. 

Finally, we prove the uniqueness of $s_{u}$. If there exist $s_{u}^{\prime}>s_{u}>0$ such that $s_{u}^{\prime} u\in\mathcal{M}$ and $s_{u} u \in \mathcal{M}$, then we have
$$
\begin{aligned}
& \frac{1}{\left(s_{u}^{\prime}\right)}\|u\|^p=\int_{\mathbb{Z}^N}\frac{(R_\alpha \ast F(s'_uu))f(s'_uu)u}{\left(s_{u}^{\prime}\right)^{p}}\,d\mu, \\
& \frac{1}{\left(s_{u}\right)}\|u\|^p=\int_{\mathbb{Z}^N}\frac{(R_\alpha \ast F(s_uu))f(s_uu)u}{\left(s_{u}\right)^{p}}\,d\mu,
\end{aligned}
$$
and hence
$$
\begin{aligned}
\left(\frac{1}{\left(s_{u}^{\prime}\right)}-\frac{1}{\left(s_{u}\right)}\right)\|u\|^p=\int_{\mathbb{Z}^N}\frac{(R_\alpha \ast F(s'_uu))f(s'_uu)u}{\left(s_{u}^{\prime}\right)^{p}}\,d\mu-\int_{\mathbb{Z}^N}\frac{(R_\alpha \ast F(s_uu))f(s_uu)u}{\left(s_{u}\right)^{p}}\,d\mu.
\end{aligned}
$$
By $\left(f_{4}\right)$, we get a contradiction.

\
\

(ii) By the HLS inequality (\ref{bo}), we get that
\begin{eqnarray}\label{aj}
\int_{\mathbb{Z}^N}(R_\alpha \ast F(u))f(u)u\nonumber\,d\mu
&\leq& C\left(\int_{\mathbb{Z}^N} |F(u)|^{\frac{2N}{N+\alpha}}\,d\mu  \right)^{\frac{N+\alpha}{2N}}\left(\int_{\mathbb{Z}^N} |f(u)u|^{\frac{2N}{N+\alpha}}\,d\mu  \right)^{\frac{N+\alpha}{2N}}\nonumber\\&\leq&C\left(\int_{\mathbb{Z}^N} \left(\varepsilon|u|^p+C_\varepsilon|u|^{\tau}\right)^{\frac{2N}{N+\alpha}}\,d\mu  \right)^{\frac{N+\alpha}{N}}\nonumber\\&\leq& \varepsilon\|u\|^{2p}_{\frac{2Np}{N+\alpha}}+C_\varepsilon\|u\|^{2\tau}_{\frac{2N\tau}{N+\alpha}}\nonumber\\&\leq& \varepsilon\|u\|^{2p}+C_\varepsilon\|u\|^{2\tau}.
\end{eqnarray}
Then for $u \in\mathcal{M}$, one has that
$$
\begin{aligned}
0&=\left\langle J^{\prime}(u), u\right\rangle\\ & =\|u\|^{p}-\int_{\mathbb{Z}^N}(R_\alpha \ast F(u))f(u)u\,d\mu  \\
& \geq\|u\|^{p}-\varepsilon\|u\|^{2p}-C_{\varepsilon}\|u\|^{2\tau} .
\end{aligned}
$$
Since $2\tau>p$, we get easily that there exists $\eta>0$ such that $\|u\| \geq \eta>0$.

\
\

(iii) For any $u \in \mathcal{M}$, by $(f_3)$ and (ii), one has that
\begin{eqnarray*}
J(u) &=& J(u)-\frac{1}{\theta}\left\langle J^{\prime}(u), u\right\rangle \\
&=&\left(\frac{1}{p}-\frac{1}{\theta}\right)\|u\|^{p}+\frac{1}{2}\int_{\mathbb{Z}^N}(R_\alpha \ast F(u))\left(\frac{2}{\theta}f(u)u-F(u)\right)\,d\mu\\
& \geq &\left(\frac{1}{p}-\frac{1}{\theta}\right)\|u\|^{p} \\& \geq &\left(\frac{1}{p}-\frac{1}{\theta}\right) \eta^{p}\\&>&0.
\end{eqnarray*}

\end{proof}

Now we can establish a  homeomorphic map between the unit sphere $S\subset H$ and the Nehari manifold $\mathcal{M}$, which states as follows.

\begin{lm}\label{lg}
  Assume that $\left(h_{1}\right)$ and $\left(f_{1}\right)$-$\left(f_{4}\right)$ hold. Define the maps $s:H\backslash\{0\}\rightarrow (0,\infty)$, $u\mapsto s_u$ and $$
\begin{aligned}
\widehat{m}: H\backslash\{0\} & \rightarrow \mathcal{M}, \\
u & \mapsto \widehat{m}(u)=s_{u} u.
\end{aligned}
$$ 
Then we have the following results,
\begin{itemize}
    \item [(i)] the maps $s$ and $\widehat{m}$ are continuous;
    \item[(ii)] the map $m:=\widehat{m}\mid_{S}$ is a homeomorphism between $S$ and $\mathcal{M}$, and the inverse of $m$ is given by
\begin{equation*}
m^{-1}(u)=\frac{u}{\|u\|}. 
\end{equation*}
\end{itemize}
\end{lm}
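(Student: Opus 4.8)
The plan is to follow the standard generalized Nehari manifold framework. The key structural fact I would record first is the scaling identity $s_{tu}=s_u/t$ for every $t>0$ and $u\in H\backslash\{0\}$, which is immediate from the uniqueness in Lemma \ref{lj}(i): the rays $\{s(tu):s>0\}$ and $\{s'u:s'>0\}$ coincide, so they meet $\mathcal{M}$ at the same point, forcing $s_{tu}\cdot t=s_u$. This identity reduces the continuity of $s$ on all of $H\backslash\{0\}$ to its continuity on the unit sphere $S$: for $u_n\to u$ I pass to the normalized sequence $v_n=u_n/\|u_n\|\to v=u/\|u\|$ (the norm being bounded away from $0$ near $u$) and recover $s_{u_n}=s_{v_n}/\|u_n\|$, so that $s_{v_n}\to s_v$ yields $s_{u_n}\to s_u$.

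For continuity of $s$ on $S$, let $u_n\to u$ in $S$ and set $t_n=s_{u_n}$, so $t_nu_n\in\mathcal{M}$. First I would show $\{t_n\}$ is bounded. The lower bound $t_n=\|t_nu_n\|\geq\eta$ comes directly from Lemma \ref{lj}(ii). For the upper bound I argue by contradiction: if $t_n\to\infty$ along a subsequence, then, using the estimate $J(su_n)\leq\frac{s^p}{p}\|u_n\|^p-\frac{s^\theta}{2}\int_{\mathbb{Z}^N}(R_\alpha\ast F(u_n))F(u_n)\,d\mu$ valid for $s\geq1$ by (\ref{95}), together with $\theta>p$, $\|u_n\|=1$, and the convergence $\int_{\mathbb{Z}^N}(R_\alpha\ast F(u_n))F(u_n)\,d\mu\to\int_{\mathbb{Z}^N}(R_\alpha\ast F(u))F(u)\,d\mu>0$, I get $J(t_nu_n)\to-\infty$, contradicting $J(t_nu_n)\geq c>0$ from Lemma \ref{lj}(iii). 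With $\{t_n\}\subset[\eta,M]$, I take any convergent subsequence $t_{n_k}\to t_0$; then $t_{n_k}u_{n_k}\to t_0u$ strongly in $H$, and passing to the limit in $\langle J'(t_{n_k}u_{n_k}),t_{n_k}u_{n_k}\rangle=0$ via $J\in C^1(H,\mathbb{R})$ gives $t_0u\in\mathcal{M}$, whence $t_0=s_u$ by the uniqueness in Lemma \ref{lj}(i). Since every subsequential limit equals $s_u$, the full sequence converges, proving $s_{u_n}\to s_u$. Continuity of $\widehat{m}(u)=s_uu$ then follows as a product of continuous maps.

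For part (ii), the map $m=\widehat{m}\mid_S$ is continuous by (i). For surjectivity, given $w\in\mathcal{M}$ the point $v=w/\|w\|\in S$ satisfies $s_v=\|w\|$ by uniqueness, so $m(v)=w$. For injectivity, if $s_{v_1}v_1=s_{v_2}v_2$ with $v_i\in S$, taking norms gives $s_{v_1}=s_{v_2}$ and hence $v_1=v_2$. A direct computation shows the proposed map $w\mapsto w/\|w\|$ satisfies both $m^{-1}\circ m=\mathrm{id}_S$ and $m\circ m^{-1}=\mathrm{id}_{\mathcal{M}}$, and it is continuous since $\|w\|\geq\eta>0$ on $\mathcal{M}$ by Lemma \ref{lj}(ii), so no division by a vanishing norm occurs. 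Therefore $m$ is a homeomorphism.

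The main obstacle is the boundedness and limit identification in part (i). The delicate analytic points are: (a) justifying that the Choquard energy $u\mapsto\int_{\mathbb{Z}^N}(R_\alpha\ast F(u))F(u)\,d\mu$ is continuous along $H$-convergent sequences, which relies on the compact embedding $H\hookrightarrow\ell^q$ (Lemma \ref{lgg}), the growth bound (\ref{ae}), and the HLS inequality (\ref{bo}); and (b) confirming that its limit is strictly positive so the upper-bound contradiction goes through, a fact already built into the applicability of Lemma \ref{lj}(i) to every nonzero element. Once the scaling identity and the uniqueness from Lemma \ref{lj} are in hand, the remainder of the argument is formal.
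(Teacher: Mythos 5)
Your proof is correct and follows essentially the same route as the paper: the scaling identity $s_{tu}t=s_u$, reduction to the sphere $S$, the lower bound $s_{u_n}\geq\eta$ from Lemma \ref{lj}(ii), the contradiction via (\ref{95}) and $\theta>p$ to bound $\{s_{u_n}\}$, and uniqueness from Lemma \ref{lj}(i) to identify the subsequential limit, with part (ii) matching the paper's surjectivity/injectivity argument verbatim. Your explicit observation that the unboundedness contradiction needs $\liminf_{n}\int_{\mathbb{Z}^N}(R_\alpha\ast F(u_n))F(u_n)\,d\mu>0$ (rather than the upper bound the paper records at that step) is a welcome extra degree of care, not a departure in method.
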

\begin{proof}
(i) Let $u_n\rightarrow u$ in $H\backslash\{0\}$. Denote $s_n=s_{u_n}$, then $\widehat{m}(u_n)=s_n u_n\in\mathcal{M}$. We first prove that for any $t>0$, $\widehat{m}(t u)=\widehat{m}(u)$. Indeed, since $\hat{m}(tu)=s_{tu}tu\in \mathcal{M}$ and $\hat{m}(u)=s_u u\in\mathcal{M}$, we get respectively that
$$\|u\|^p=\int_{\mathbb{Z}^N}(R_\alpha \ast F(s_{tu}tu))f(s_{tu}tu)\frac{u}{(s_{tu}t)^{p-1}}\,d\mu,$$
and
$$\|u\|^p=\int_{\mathbb{Z}^N}(R_\alpha \ast F(s_uu))f(s_uu)\frac{u}{s_u^{p-1}}\,d\mu.$$
The two formulas above imply $s_u=s_{tu}t$, and hence
$$\hat{m}(tu)=s_{tu}tu=s_uu=\hat{m}(u).$$
Therefore, without loss of generality, we may assume that $\{u_n\}\subset S$. By Lemma \ref{lj} (ii), we get that $$s_n=\|s_nu_n\|\geq\eta>0.$$ 
We claim that $\{s_n\}$ is bounded. Otherwise, $s_n\rightarrow\infty$ as $n\rightarrow\infty$. 
By (\ref{95}), one has that $$\int_{\mathbb{Z}^N}(R_\alpha \ast F(s_nu_n))F(s_nu_n)\,d\mu\geq s_n^\theta\int_{\mathbb{Z}^N}\left(R_\alpha \ast F\left(u_n\right)\right)F\left(u_n\right)\,d\mu.$$
Moreover, since $\|u_n\|=1$, we get easily that
$$\int_{\mathbb{Z}^N}\left(R_\alpha \ast F\left(u_n\right)\right)F\left(u_n\right)\,d\mu\leq C.$$
Then it follows from (iii) of Lemma \ref{lj} and   $\theta>p$ that
\begin{eqnarray*}
 0 &<& \frac{J\left(s_nu_{n}\right)}{\left\|s_nu_{n}\right\|^{p}}\\&=&\frac{1}{p}-\frac{\int_{\mathbb{Z}^N}(R_\alpha \ast F(s_nu_n))F(s_nu_n)\,d\mu}{\|s_nu_{n}\|^p}\\&\leq & \frac{1}{p}-s_n^{\theta-p}\int_{\mathbb{Z}^N}\left(R_\alpha \ast F\left(u_n\right)\right)F\left(u_n\right)\,d\mu\\& \rightarrow&-\infty.  \end{eqnarray*}
 We get a contradiction. Hence  $\{s_n\}$ is bounded. Then, up to a subsequence, there exists $s_0>0$ such that $s_n\rightarrow s_0$ and $\widehat{m}(u_n)\rightarrow s_0 u$. Since $\mathcal{M}$ is closed, we get that $s_0u\in\mathcal{M}$, which implies that $s_0=s_u$. Hence
 $$s_{u_n}\rightarrow s_u$$
 and
 $$\widehat{m}(u_n)\rightarrow s_0u=s_uu=\widehat{m}(u).$$
Thus the maps $s$ and $\widehat{m}$ are continuous.

\
\

 (ii) Obviously, $m$ is continuous. We first prove that $m$ is surjective.  In fact, for any $u\in\mathcal{M}$, denote $\tilde{u}=\frac{u}{\|u\|}$, then $\tilde{u}\in S.$ By Lemma \ref{lj} (i), there exists a unique $s_{\tilde{u}}$ such that $s_{\tilde{u}}\tilde{u}\in \mathcal{M}$. Note that $u=\|u\|\tilde{u}\in \mathcal{M}$. By the uniqueness of $s_{\tilde{u}}$, we get $s_{\tilde{u}}=\|u\|.$ Hence $m(\tilde{u})=s_{\tilde{u}}\tilde{u}=u\in\mathcal{M}$.  
 
 Next, we show that $m$ is injective. Indeed, let $u_1,u_2\in S$ and $u_1\neq u_2$.  If $s_{u_1}= s_{u_2}$, clearly $s_{u_1}u_1\neq s_{u_2}u_2$. If $s_{u_1}\neq s_{u_2}$, then $\|s_{u_1}u_1\|=s_{u_1}\neq s_{u_2}=\|s_{u_2}u_2\|$, which implies that $s_{u_1}u_1\neq s_{u_2}u_2$. 
 
 Hence $m$ has an inverse mapping $m^{-1}:\mathcal{M}\rightarrow S$ with $m^{-1}(u)=\frac{u}{\|u\|}.$ 
    
\end{proof}

By Lemma \ref{lj} and Lemma \ref{lg}, we have the following corollary.
\begin{crl}\label{jb}
Let $\left(h_{1}\right)$ and $\left(f_{1}\right)$-$\left(f_{4}\right)$ hold. Then we have
$$\inf\limits_{u\in\mathcal{M}} J(u)=\inf\limits_{u\in H \backslash\{0\}} \max _{s>0} J(su)=\inf\limits_{\gamma\in\Gamma}\max\limits_{s\in [0,1]} J(\gamma(s)),$$
where $$\Gamma=\{\gamma\in C([0,1],H):\gamma(0)=0, J(\gamma(1))<0\}.$$ 
\end{crl}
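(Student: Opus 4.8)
The plan is to write $c:=\inf_{\mathcal{M}}J$, $c_1:=\inf_{u\in H\setminus\{0\}}\max_{s>0}J(su)$ and $c_2:=\inf_{\gamma\in\Gamma}\max_{s\in[0,1]}J(\gamma(s))$, and to establish the two equalities $c=c_1$ and $c_1=c_2$ in turn. The first equality is an immediate consequence of Lemma \ref{lj}(i). Indeed, for every $u\in H\setminus\{0\}$ the supremum $\max_{s>0}J(su)$ is attained at the unique point $s_u$ with $s_uu\in\mathcal{M}$, so $\max_{s>0}J(su)=J(s_uu)\ge\inf_{\mathcal{M}}J=c$, and taking the infimum over $u$ gives $c_1\ge c$. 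Conversely, for any $v\in\mathcal{M}$ the uniqueness in Lemma \ref{lj}(i) forces $s_v=1$, hence $J(v)=\max_{s>0}J(sv)\ge c_1$; taking the infimum over $\mathcal{M}$ yields $c\ge c_1$.

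For $c_2\le c_1$ I would produce, for each fixed $u\in H\setminus\{0\}$, an explicit competitor path. By \eqref{md} we have $J(su)\to-\infty$, so I can choose $T>s_u$ with $J(Tu)<0$ and set $\gamma(s)=sTu$ for $s\in[0,1]$. Then $\gamma(0)=0$, $J(\gamma(1))=J(Tu)<0$, so $\gamma\in\Gamma$, and since the unique maximizer $s_u$ of $r\mapsto J(ru)$ lies in $[0,T]$ we get $\max_{s\in[0,1]}J(\gamma(s))=\max_{r\in[0,T]}J(ru)=\max_{r>0}J(ru)$. Hence $c_2\le\max_{s>0}J(su)$ for every $u$, and taking the infimum gives $c_2\le c_1$.

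The substance is the reverse inequality $c_2\ge c$, which I would obtain by showing that \emph{every} $\gamma\in\Gamma$ meets $\mathcal{M}$; then at a crossing point $v=\gamma(\bar s)\in\mathcal{M}$ one has $\max_{[0,1]}J\circ\gamma\ge J(v)\ge c$. Introduce the continuous scalar function $\Theta(u):=\langle J'(u),u\rangle=\|u\|^{p}-\int_{\mathbb{Z}^N}(R_\alpha\ast F(u))f(u)u\,d\mu$, so that $\mathcal{M}=\{u\ne 0:\Theta(u)=0\}$. I record two sign facts. First, by \eqref{aj}, for $u\ne 0$ with $\|u\|$ small one has $\Theta(u)\ge\|u\|^{p}-\varepsilon\|u\|^{2p}-C_\varepsilon\|u\|^{2\tau}>0$, since $2p,2\tau>p$. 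Second, along any ray $s\mapsto J(su)$ the derivative equals $s^{-1}\Theta(su)$ and can vanish only at the unique Nehari point $s_u$ (Lemma \ref{lj}(i)), so $\Theta(su)$ keeps a constant sign on each of $(0,s_u)$ and $(s_u,\infty)$; by the first fact it is positive on $(0,s_u)$, and it must be negative on $(s_u,\infty)$ since otherwise $J(su)$ would not tend to $-\infty$. Because $J(su)>0$ for $s\in(0,s_u]$ (from \eqref{jv} and this monotonicity), the hypothesis $J(\gamma(1))<0$ forces $\gamma(1)\ne 0$ and $1>s_{\gamma(1)}$, whence $\Theta(\gamma(1))<0$. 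Now set $t^{*}:=\sup\{t\in[0,1]:\gamma(t)=0\}$; this set is closed, so $\gamma(t^{*})=0$, and $t^{*}<1$ because $\gamma(1)\ne 0$. For $s$ slightly above $t^{*}$ the value $\gamma(s)$ is nonzero and small, so $\Theta(\gamma(s))>0$ by the first fact, while $\Theta(\gamma(1))<0$; applying the intermediate value theorem to $s\mapsto\Theta(\gamma(s))$ on $[s,1]$ produces $\bar s\in(t^{*},1)$ with $\Theta(\gamma(\bar s))=0$, and $\gamma(\bar s)\ne 0$ since $\bar s>t^{*}$. Thus $\gamma(\bar s)\in\mathcal{M}$, giving $c_2\ge c=c_1$ after taking the infimum over $\gamma$.

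I expect the main obstacle to be precisely this crossing step: turning the variational hypothesis $J(\gamma(1))<0$ into the pointwise sign $\Theta(\gamma(1))<0$ (which relies on reading off the fibering-map shape from the uniqueness in Lemma \ref{lj}(i), ultimately powered by $(f_4)$), and then running the intermediate value argument cleanly near $s=0$, where $\gamma$ may vanish and $\Theta(\gamma(0))=0$ is not usable—this is why the threshold $t^{*}$ is needed to isolate a genuinely nonzero small value on which $\Theta$ is positive.
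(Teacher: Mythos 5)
Your proof is correct and complete: the identification $c=c_1$ via the fibering maps of Lemma \ref{lj}(i), the ray competitor for $c_2\le c_1$, and the intermediate-value crossing argument (with the threshold $t^*$ handling possible zeros of $\gamma$ near $s=0$) are exactly the standard Szulkin--Weth argument. The paper itself omits the proof, deferring to \cite[Lemma 4.3]{W3}, and that reference runs the same scheme, so your write-up in effect supplies the details the paper leaves out.
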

\begin{proof}
The proof is similar to that of \cite[Lemma 4.3]{W3}. We omit it here.

\end{proof}

The following lemma shows that $\Psi$ (see below) is of  class $C^{1}$ and there is a one-to-one correspondence between critical points of $\Psi$ and nontrivial critical points of $J$. 

\begin{lm}\label{lgk}
Assume that $\left(h_{1}\right)$ and $\left(f_{1}\right)$-$\left(f_{4}\right)$ hold. Define the functional
  $$
  \begin{aligned}
\Psi: S &\rightarrow \mathbb{R},\\
w & \mapsto \Psi(w)=J(m(w)).
\end{aligned}
$$ 
Then we have
\begin{itemize}
    \item [(i)] $\Psi(w) \in C^{1}(S, \mathbb{R})$ and $$
\langle \Psi^{\prime}(w),z\rangle=\|m(w)\|\left\langle J^{\prime}(m(w)), z\right\rangle, \quad z \in T_{w}(S)=\{v\in H:(w, v)=0\};
$$
\item [(ii)] $\left\{w_{n}\right\}$ is a Palais-Smale sequence for $\Psi$ if and only if $\left\{m(w_{n})\right\}$ is a Palais-Smale sequence for $J$;

\item[(iii)] 
$w\in S$ is a critical point of $\Psi$ if and only if $m(w) \in \mathcal{M}$ is a nontrivial critical point of $J$. Moreover, the corresponding critical values of $\Psi$ and $J$ coincide and $
\inf\limits_{S} \Psi=\inf\limits_{\mathcal{M}}J.
$
\end{itemize}
 
\end{lm}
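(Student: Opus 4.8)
The plan is to carry out the Szulkin--Weth reduction, transferring the problem from the Nehari manifold $\mathcal{M}$ (which is merely continuous, since $f$ is only assumed continuous) to the smooth unit sphere $S$, on which $\Psi=J\circ m$ turns out to be genuinely $C^1$. The technical heart is part (i): once the derivative formula $\langle \Psi'(w),z\rangle=\|m(w)\|\langle J'(m(w)),z\rangle$ is established, parts (ii) and (iii) reduce to bookkeeping built on Lemmas \ref{lj} and \ref{lg}. To prove (i), I would first pass to the auxiliary functional $\widehat{\Psi}\colon H\setminus\{0\}\to\R$, $\widehat{\Psi}(u)=J(\widehat m(u))=J(s_uu)$, so that $\Psi=\widehat{\Psi}\mid_S$; note that $\widehat m(tu)=\widehat m(u)$ for $t>0$ (from the proof of Lemma \ref{lg}(i)) makes $\widehat{\Psi}$ $0$-homogeneous. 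Fixing $u\neq 0$, $v\in H$ and writing $s=s_u$, $s'=s_{u+\sigma v}$, I would use the maximality $J(s_uu)=\max_{t>0}J(tu)$ to sandwich the difference quotient:
$$J(s(u+\sigma v))-J(su)\le \widehat{\Psi}(u+\sigma v)-\widehat{\Psi}(u)\le J(s'(u+\sigma v))-J(s'u).$$
Applying the mean value theorem to the $C^1$ functional $J$ on each side, dividing by $\sigma$, and letting $\sigma\to 0$ while invoking the continuity $s_{u+\sigma v}\to s_u$ (Lemma \ref{lg}(i)) and the continuity of $J'$, both outer terms converge to $s_u\langle J'(s_uu),v\rangle$. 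This gives Gâteaux differentiability with $\langle\widehat{\Psi}'(u),v\rangle=s_u\langle J'(s_uu),v\rangle$; continuity of $u\mapsto s_u$ and of $J'$ upgrades this to $\widehat{\Psi}\in C^1$, and restricting to $S$ with $\|m(w)\|=s_w$ yields the stated formula on $T_w(S)$.

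For (ii) the crucial auxiliary identity is $\|\Psi'(w)\|_{T_w^*(S)}=\|m(w)\|\,\|J'(m(w))\|_{H^*}$. I would deduce it from (i) together with the fact that $m(w)\in\mathcal{M}$ forces $\langle J'(m(w)),w\rangle=0$: the radial component of $J'(m(w))$ vanishes, so using the splitting $H=\R w\oplus T_w(S)$ the supremum of $\langle J'(m(w)),\cdot\rangle$ over $T_w(S)$ coincides with its supremum over all of $H$. Given this identity, the equivalence is immediate: the energy levels agree because $\Psi(w_n)=J(m(w_n))$, and since $\|m(w_n)\|=s_{w_n}\ge\eta>0$ by Lemma \ref{lj}(ii) while $\|m(w_n)\|$ stays bounded above (a Palais--Smale sequence for $J$ is bounded in $H$ via the coercivity estimate behind Lemma \ref{lj}(iii), namely $J(u_n)-\frac1\theta\langle J'(u_n),u_n\rangle\ge(\frac1p-\frac1\theta)\|u_n\|^p$), the conditions $\|J'(m(w_n))\|_{H^*}\to 0$ and $\|\Psi'(w_n)\|\to 0$ are equivalent.

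For (iii), if $w$ is a critical point of $\Psi$ then (i) gives $\|m(w)\|\langle J'(m(w)),z\rangle=0$ for all $z\in T_w(S)$; since $\|m(w)\|>0$ and $\langle J'(m(w)),w\rangle=0$, the decomposition $H=\R w\oplus T_w(S)$ forces $J'(m(w))=0$, and $m(w)$ is nontrivial because $\|m(w)\|\ge\eta$. The converse is read directly off the derivative formula. The critical values coincide since $\Psi(w)=J(m(w))$ by definition, and because $m\colon S\to\mathcal{M}$ is a bijection (Lemma \ref{lg}(ii)) we obtain $\inf_S\Psi=\inf_{\mathcal{M}}J$.

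I expect the main obstacle to be the $C^1$ regularity and derivative formula in (i): the delicate point is justifying passage to the limit in the squeeze argument, which rests essentially on the continuity of $u\mapsto s_u$ (Lemma \ref{lg}(i)) and, underneath it, on the \emph{strict uniqueness} of the maximizer supplied by $(f_4)$ through Lemma \ref{lj}(i); without that uniqueness the maps $m$ and $\Psi$ would not even be well defined. A secondary subtlety is the norm identity underlying (ii): the orthogonal splitting is transparent when $p=2$ (true Hilbert structure), whereas for $p>2$ one must argue with the closed-hyperplane decomposition $H=\R w\oplus T_w(S)$ and the vanishing $\langle J'(m(w)),w\rangle=0$ rather than a literal orthogonal projection.
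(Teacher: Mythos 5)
Your argument is correct and is essentially the standard Szulkin--Weth reduction that the paper itself relies on: the paper offers no proof here beyond citing \cite[Lemma 5.1]{W3}, and that reference proceeds exactly as you do (the squeeze/mean-value argument with the continuity of $u\mapsto s_u$ for (i), then the norm identity via $\langle J'(m(w)),w\rangle=0$ and the splitting $H=\mathbb{R}w\oplus T_{w}(S)$ together with the bounds $\eta\le\|m(w_n)\|\le C$ for (ii)--(iii)). Your closing caveat is well placed: for $p>2$ the space $H$ is only a Banach space (despite the paper calling it a Hilbert space), so the identity in (ii) should be read as a two-sided equivalence coming from the uniformly bounded projection onto $\mathbb{R}w$ rather than from a literal orthogonal decomposition.
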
 

\begin{proof}
    With some modifications in \cite[Lemma 5.1]{W3}, we can get the desired result. 
\end{proof}

\
\

{\bf Proof of Theorem \ref{t2}. }  By Lemma \ref{lgk} (iii), one gets that $c=\inf\limits_{S}\Psi$. Let $\left\{w_{n}\right\} \subset S$ be a minimizing sequence satisfying $\Psi(w_{n}) \rightarrow c$. By Ekeland's variational principle, we may assume that $\Psi^{\prime}(w_{n}) \rightarrow 0$ as $n \rightarrow \infty$. Then one gets that $\left\{w_{n}\right\}$ is a $(PS)_c$ sequence of $\Psi.$ 

Let $u_{n}=m(w_{n}) \in \mathcal{M}$. Then it follows from Lemma \ref{lgk} (ii) that $$J\left(u_{n}\right) \rightarrow c,\qquad\text{and}\qquad J^{\prime}\left(u_{n}\right) \rightarrow 0, \quad n \rightarrow \infty.$$  
Note that $\theta>p$. Then we get that
\begin{eqnarray}\label{ba}
\|u_n\|^p\nonumber&=&\frac{p}{2}\int_{\mathbb{Z}^N}(R_\alpha \ast F(u_n))F(u_n)\,d\mu+pc+o_n(1)\nonumber\\&\leq& \frac{p}{\theta}\int_{\mathbb{Z}^N}(R_\alpha \ast F(u_n))f(u_n)u_n\,d\mu+pc+o_n(1)\nonumber\\&\leq&\frac{p}{\theta} \left(\|u_n\|^p+o_n(1)\|u_n\|\right)+pc+o_n(1)\nonumber\\&=&\frac{p}{\theta}\|u_n\|^p+o_n(1)\|u_n\|+pc+o_n(1),
\end{eqnarray}
where $o_n(1)\rightarrow 0$ as $n\rightarrow\infty.$
This inequality means that $\{u_n\}$ is bounded in $H$.
 Hence there exists $u \in H$ such that $$u_{n} \rightharpoonup u,\quad\text{in~}H, \qquad\text{and}\qquad u_n\rightarrow u,\quad\text{pointwise~in}~\mathbb{Z}^N.$$

If \begin{equation}\label{hq}
 \left\|u_{n}\right\|_{\infty} \rightarrow 0,\quad n\rightarrow\infty,   
\end{equation} 
then by Lemma \ref{lgh}, we get that $u_{n} \rightarrow 0$ in $\ell^{q}(\mathbb{Z}^N)$ with $q>p$, which makes  
\begin{eqnarray*}
    \int_{\mathbb{Z}^N}(R_\alpha \ast F(u_n))f(u_n)u_n\,d\mu&\leq& C\left( \|u_n\|^{2p}_{\frac{2Np}{N+\alpha}}+\|u_n\|^{2\tau}_{\frac{2N\tau}{N+\alpha}}\right)\\&\rightarrow& 0,\quad n\rightarrow\infty.
\end{eqnarray*}
Namely
$$\int_{\mathbb{Z}^N}(R_\alpha \ast F(u_n))f(u_n)u_n\,d\mu=o_{n}(1).$$  Since $\{u_n\}\subset\mathcal{M}$, one has that
$$
\begin{aligned}
0=\left\langle J^{\prime}\left(u_{n}\right), u_{n}\right\rangle & =\left\|u_{n}\right\|^{p}-\int_{\mathbb{Z}^N}(R_\alpha \ast F(u_n))f(u_n)u_n\,d\mu\\
& =\left\|u_{n}\right\|^{p}+o_{n}(1),
\end{aligned}
$$
which implies that $\left\|u_{n}\right\| \rightarrow 0$ as $n \rightarrow \infty$. This contradicts $\left\|u_{n}\right\| \geq \eta>0$. Thus (\ref{hq}) does not hold, and hence there exists $\delta>0$ such that
\begin{equation}\label{hw}
\liminf _{n \rightarrow \infty}\left\|u_{n}\right\|_{\infty} \geq \delta>0.
\end{equation}
This means that $u\neq 0$ and there exists a sequence $\left\{y_{n}\right\} \subset \mathbb{Z}^N$ such that
\begin{equation*}
\left|u_{n}(y_{n})\right| \geq \frac{\delta}{2}.
\end{equation*}
Let $\{k_{n}\}\subset\mathbb{Z}^N$ satisfy $\left\{y_{n}-k_{n} \tau\right\} \subset \Omega$, where $\Omega=[0, \tau)^{N}$. Let $v_{n}(y):=u_{n}\left(y+k_{n} \tau\right)$, then for $\{v_{n}\}$,
\begin{equation*}
\left\|v_{n}\right\|_{l^{\infty}(\Omega)} \geq\left|v_{n}\left(y_{n}-k_{n} \tau\right)\right|=\left|u_{n}(y_{n})\right| \geq \frac{\delta}{2}>0.
\end{equation*}
Since $h$ is $\tau$-periodic, $\mathcal{M}$ and $J$ are invariant under the translation, we obtain that $\left\{v_{n}\right\}\subset\mathcal{M}$ and it is also a $(PS)_c$ sequence for $J$, 
\begin{equation}\label{98}
J\left(v_{n}\right) \rightarrow c,\qquad\text{and}\qquad J^{\prime}\left(v_{n}\right) \rightarrow 0, \quad n \rightarrow \infty.    
\end{equation}
Moreover, $\{v_n\}$ is bounded in $H$. Then we can find a $v\in H$ with $v\neq 0$ such that 
\begin{equation}\label{96}
 v_{n} \rightharpoonup v,\quad\text{in~}H, \qquad\text{and}\qquad v_n\rightarrow v,\quad \text{pointwise~in~}\mathbb{Z}^N.  \end{equation}

In the following, we prove that $v$ is a ground state solution to the equation (\ref{aa}). We first claim that 
\begin{equation}\label{99}
 \int_{\mathbb{Z}^N}(R_\alpha \ast F(v_n))f(v_n)\phi\,d\mu\rightarrow \int_{\mathbb{Z}^N}(R_\alpha \ast F(v))f(v)\phi\,d\mu, \quad \phi\in C_c(\mathbb{Z}^N).  
\end{equation}
In fact, let $\text{supp}(\phi)\subset B_r$ with $r>1$. A direct calculation yields that
 $$
 \begin{aligned}
 &\left|\int_{\mathbb{Z}^N}(R_\alpha \ast F(v_n))f(v_n)\phi\,d\mu- \int_{\mathbb{Z}^N}(R_\alpha \ast F(v))f(v)\phi\,d\mu\right|\\=&\left|\int_{\mathbb{Z}^N}(R_\alpha \ast \left(F(v_n)-F(v)\right)f(v)\phi\,d\mu\right|+\left|\int_{\mathbb{Z}^N}(R_\alpha \ast F(v_n))\left(f(v_n)-f(v)\right)\phi\,d\mu\right|.
 \end{aligned}
 $$
 
For the first term on the right hand side, by (\ref{ae}) and (\ref{ac}), we obtain that $\{F(u_n)\}$ is bounded in $\ell^{\frac{2N}{N+\alpha}}(\mathbb{Z}^N)$. Then it follows from the HLS inequality (\ref{p1}) that $\{(R_\alpha \ast F(u_n))\}$ is bounded in $\ell^{\frac{2N}{N-\alpha}}(\mathbb{Z}^N)$. Moreover, we have $F(u_n)\rightarrow F(u)$ pointwise in $\mathbb{Z}^N$. By passing to a subsequence, we have
$$(R_\alpha \ast F(v_n))\rightharpoonup (R_\alpha \ast F(v)), \quad\text{in~}\ell^{\frac{2N}{N-\alpha}}(\mathbb{Z}^N).$$ 
Note that $f(v)\phi\in\ell^{\frac{2N}{N+\alpha}}(\mathbb{Z}^N) $, then $$\left|\int_{\mathbb{Z}^N}(R_\alpha \ast \left(F(v_n)-F(v)\right)f(v)\phi\,d\mu\right|\rightarrow0,\quad n\rightarrow\infty.$$

For the second term on the right hand side, by the HLS inequality (\ref{bo}), we obtain that
$$
\begin{aligned}
  &\left|\int_{\mathbb{Z}^N}(R_\alpha \ast F(v_n))\left(f(v_n)-f(v)\right)\phi\,d\mu\right|\\\leq & C\|F(v_n)\|_{\frac{2N}{N+\alpha}}\left(\int_{\mathbb{Z}^N}|(f(v_n)-f(v))\phi|^{\frac{2N}{N+\alpha}}\,d\mu\right)^{\frac{N+\alpha}{2N}}\\\leq& C\left(\int_{\mathbb{Z}^N}|(f(v_n)-f(v))\phi|^{\frac{2N}{N+\alpha}}\,d\mu\right)^{\frac{N+\alpha}{2N}}\\=&C\left(\int_{B_r}|(f(v_n)-f(v))\phi|^{\frac{2N}{N+\alpha}}\,d\mu\right)^{\frac{N+\alpha}{2N}}\\ \rightarrow& 0,\quad n\rightarrow\infty. 
\end{aligned}
$$
Hence we get (\ref{99}). Since $B_r$ is a finite set in $\mathbb{Z}^N$, by (\ref{96}) and (\ref{99}), we derive that
$$
\begin{aligned}
   \langle J'(v_n),\phi\rangle=&\int_{\mathbb{Z}^N}\left(|\nabla v_n|^{p-2}\nabla v_n \nabla \phi+h|v_n|^{p-2} v_n \phi\right) \,d\mu-\int_{\mathbb{Z}^N}(R_\alpha \ast F(v_n))f(v_n)\phi\,d\mu\\=&\int_{B_r\cup \partial B_r}|\nabla v_n|^{p-2}\nabla v_n \nabla \phi\,d\mu+\int_{B_r}h|v_n|^{p-2} v_n \phi \,d\mu-\int_{B_r}(R_\alpha \ast F(v_n))f(v_n)\phi\,d\mu\\ \rightarrow & \int_{B_r\cup \partial B_r}|\nabla v|^{p-2}\nabla v \nabla \phi\,d\mu+\int_{B_r}h|v|^{p-2} v\phi\,d\mu-\int_{B_r}(R_\alpha \ast F(v))f(v)\phi\,d\mu\\=&\int_{\mathbb{Z}^N}\left(|\nabla v|^{p-2}\nabla v \nabla \phi+h|v|^{p-2} v \phi\right) \,d\mu-\int_{\mathbb{Z}^N}(R_\alpha \ast F(v))f(v)\phi\,d\mu\\=& \langle J'(v),\phi\rangle,
\end{aligned}
$$
where $\partial B_r:=\{y\in\mathbb{Z}^N\backslash B_r :\exists~x\in B_r~ \text{such~that}~y\sim x \}$ is the vertex boundary of $B_r$. Combined with (\ref{98}), we get that $$0\leq \left|\langle J'(v),\phi\rangle\right|=\lim\limits_{n\rightarrow\infty}\left|\langle J'(v_n),\phi\rangle\right|\leq \lim\limits_{n\rightarrow\infty}\|J'(v_n)\|\|\phi\|=0.$$
This means that $\langle J'(v),\phi\rangle=0$. Since $C_c(\mathbb{Z}^N)$ is dense in $H$, we obtain that for any $\phi\in H$,
 $\langle J'(v),\phi\rangle=0$, and hence $v\in \mathcal{M}$.
It remains to prove $J(v)=c$. 
By $(f_3)$, we have that
\begin{equation}\label{97}
 \left(\frac{1}{p}f(v_n)v_n-\frac{1}{2}F(v_n)\right)>\frac{1}{2}\left(\frac{2}{\theta}f(v_n)v_n-F(v_n)\right)\geq 0.   
\end{equation}
Then combing (\ref{98}), (\ref{96}), (\ref{97}) and Fatou's lemma, we get that
$$
\begin{aligned}
c\leq& J(v)=J(v)-\frac{1}{p}\langle J'(v),v\rangle\\=& \frac{1}{p}\int_{\mathbb{Z}^N}(R_\alpha \ast F(v))f(v)v\,d\mu-\frac{1}{2}\int_{\mathbb{Z}^N}(R_\alpha \ast F(v))F(v)\,d\mu\\=&\int_{\mathbb{Z}^N}(R_\alpha \ast F(v))\left(\frac{1}{p}f(v)v-\frac{1}{2}F(v)\right)\,d\mu\\ \leq &\int_{\mathbb{Z}^N}(R_\alpha \ast F(v_n))\left(\frac{1}{p}f(v_n)v_n-\frac{1}{2}F(v_n)\right)\,d\mu\\=&\frac{1}{p}\int_{\mathbb{Z}^N}(R_\alpha \ast F(v_n))f(v_n)v_n\,d\mu-\frac{1}{2}\int_{\mathbb{Z}^N}(R_\alpha \ast F(v_n))F(v_n)\,d\mu\\=&
\lim\limits_{n\rightarrow\infty} J(v_n)-\frac{1}{p}\langle J'(v_n),v_n\rangle\\=&\lim\limits_{n\rightarrow\infty} J(v_n)\\=& c.
\end{aligned}
$$
Hence $J(v)=c$. The proof is completed.

 \qed

\
\

\section{Proof of Theorem \ref{t1}}
In this section, we prove the existence of ground state solutions to the equation  (\ref{aa}) by the mountain pass theorem and the method of Nehari manifold.  

Recall that, for a given functional $\Phi\in C^{1}(X,\mathbb{R})$, a sequence $\{u_n\}\subset X$ is a Palais-Smale sequence at level $c\in\mathbb{R}$, $(PS)_c$ sequence for short, of the functional $\Phi$, if it satisfies, as $n\rightarrow\infty$,
\begin{eqnarray*}
\Phi(u_n)\rightarrow c, \qquad \text{in}~ X,\qquad\text{and}\qquad
\Phi'(u_n)\rightarrow 0, \qquad \text{in}~X^*,
\end{eqnarray*}
where $X$ is a Banach space and $X^{*}$ is the dual space of $X$. Moreover, we say that $\Phi$ satisfies $(PS)_c$ condition, if any $(PS)_c$ sequence has a convergent subsequence. 

First we show that
 the functional $J$ satisfies the mountain pass geometry.
\begin{lm}\label{lm}
  Assume that $(h_1)$ and $(f_1)$-$(f_3)$ hold. Then
\begin{itemize}
    \item[(i)] there exist $\sigma, \rho>0$ such that $J(u) \geq \sigma>0$ for $\|u\|=\rho$;
    \item[(ii)] there exists $e \in H$ with $\|e\|>\rho$ such that $J(e)< 0$.  
\end{itemize}  
\end{lm}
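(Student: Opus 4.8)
This lemma is the standard pair of mountain-pass geometric estimates, and both parts follow from the growth bounds already collected in the excerpt. The key inputs are the estimate (\ref{ae}) on $|F(t)|$, the embedding (\ref{ac}), the HLS inequality (\ref{bo}), and for part (ii) the superlinearity encoded in Lemma \ref{lhf} (inequality (\ref{95})). The main structural observation is that the convolution term $\int_{\mathbb{Z}^N}(R_\alpha \ast F(u))F(u)\,d\mu$ is controlled above by $\varepsilon\|u\|^{2p}+C_\varepsilon\|u\|^{2\tau}$ — this is precisely the computation already performed in (\ref{mc}) in the proof of Lemma \ref{lj}(i), so part (i) is essentially a specialization of that argument at a fixed norm.

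\textbf{Part (i).} For $u\in H$, I would start from the definition of $J$ and apply the chain of inequalities (\ref{mc}), which combines (\ref{ae}), (\ref{ac}), and the HLS inequality (\ref{bo}), to obtain
\begin{equation*}
J(u)=\frac{1}{p}\|u\|^{p}-\frac{1}{2}\int_{\mathbb{Z}^N}(R_\alpha \ast F(u))F(u)\,d\mu \geq \frac{1}{p}\|u\|^{p}-\varepsilon\|u\|^{2p}-C_\varepsilon\|u\|^{2\tau}.
\end{equation*}
Since $2\tau>\frac{(N+\alpha)p}{N}>p$ and $2p>p$, both subtracted terms are of higher order in $\|u\|$ than the leading term $\frac{1}{p}\|u\|^p$. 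Fixing $\|u\|=\rho$ for $\rho>0$ small and choosing $\varepsilon$ small, the right-hand side equals $\rho^p\bigl(\frac{1}{p}-\varepsilon\rho^{p}-C_\varepsilon\rho^{2\tau-p}\bigr)$, which is bounded below by a positive constant $\sigma$ once $\rho$ is taken small enough. This gives the claim.

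\textbf{Part (ii).} The idea is to pick any fixed $u_0\in H\setminus\{0\}$ and test $J$ along the ray $su_0$, then send $s\to\infty$. This is exactly the limit computed in (\ref{md}) during the proof of Lemma \ref{lj}(i): using $\theta>p$ together with Lemma \ref{lhf}, one has for $s\geq 1$
\begin{equation*}
J(su_0)\leq \frac{s^{p}}{p}\|u_0\|^p-\frac{s^\theta}{2}\int_{\mathbb{Z}^N}(R_\alpha \ast F(u_0))F(u_0)\,d\mu\longrightarrow -\infty,\quad s\to\infty,
\end{equation*}
provided the integral $\int_{\mathbb{Z}^N}(R_\alpha \ast F(u_0))F(u_0)\,d\mu$ is strictly positive for some choice of $u_0$. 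Hence for $s$ large enough both $\|su_0\|>\rho$ and $J(su_0)<0$ hold, and I set $e=su_0$.

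\textbf{Main obstacle.} The only genuine subtlety is guaranteeing that $\int_{\mathbb{Z}^N}(R_\alpha \ast F(u_0))F(u_0)\,d\mu>0$ for at least one admissible $u_0$, since otherwise the dominant term in (\ref{md}) vanishes and the argument collapses. This positivity should be extracted from $(f_3)$: the hypothesis $0\le \theta F(t)\le 2f(t)t$ forces $F\ge 0$, and a standard consequence is that $F$ cannot vanish identically, so $F(t_0)>0$ for some $t_0$; choosing $u_0$ to take the value $t_0$ at a single vertex and using that $R_\alpha$ is a positive kernel (being the Green's function of the fractional Laplacian, as recorded after (\ref{aa})) makes the double integral strictly positive. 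Verifying this positivity of $R_\alpha$ carefully is the one place where I would need to invoke the explicit Fourier representation of $R_\alpha$ rather than just its growth asymptotics.
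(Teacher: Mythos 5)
Your proof is correct and follows essentially the same route as the paper: part (i) reuses the estimate (\ref{mc}) to obtain $J(u)\geq \frac{1}{p}\|u\|^{p}-\varepsilon\|u\|^{2p}-C_\varepsilon\|u\|^{2\tau}$ and then takes $\rho$, $\varepsilon$ small, while part (ii) invokes the divergence (\ref{md}) along a ray $su_0$ and sets $e=t_0u_0$ for $t_0$ large. The positivity of $\int_{\mathbb{Z}^N}(R_\alpha\ast F(u_0))F(u_0)\,d\mu$ that you flag as the ``main obstacle'' is indeed left implicit in the paper, and your resolution via $F\geq 0$ (from $(f_3)$), nontriviality of $F$, and positivity of the kernel $R_\alpha$ is the standard way to close that gap.
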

\begin{proof}
  (i) By similar arguments to (\ref{mc}), one has that
\begin{eqnarray*}\label{uc}
    \int_{\mathbb{Z}^N}(R_\alpha \ast F(u))F(u)\,d\mu\leq \varepsilon\|u\|^{2p}+C_\varepsilon\|u\|^{2\tau},
\end{eqnarray*}
and hence
\begin{align*}
J(u) & =\frac{1}{p}\|u\|^p-\frac{1}{2}  \int_{\mathbb{Z}^N}(R_\alpha \ast F(u))F(u)\,d\mu\\
& \geq \frac{1}{p}\|u\|^p-\varepsilon\|u\|^{2p}-C_\varepsilon\|u\|^{2\tau}.
\end{align*}
Note that $2\tau>\frac{(N+\alpha)p}{N}>p$. Let $\varepsilon\rightarrow 0^+$,  then there exist $\sigma, \rho>0$ small enough such that $J(u) \geq \sigma>0$ for $\|u\|=\rho$.

\
\

(ii) Let $u\in H\backslash\{0\}$ be fixed. Then it follows from (\ref{md}) that
\begin{eqnarray*}\label{ud}
 \lim _{t \rightarrow\infty} J(t u)\rightarrow -\infty.   
\end{eqnarray*}
Hence, there exists $t_{0}>0$ large enough such that $\left\|t_{0} u\right\|>\rho$ and $J\left(t_{0} u\right)<0$. We get the result by letting $e=t_{0} u$. 
\end{proof}

Next, we show that $J$ satisfies the $(PS)_c$ condition.

\begin{lm}\label{ln}
Let $(h_1),(h_2)$ and $(f_1)$-$(f_3)$ hold. Then for any $c\in\mathbb{R}$, $J$ satisfies the $(PS)_c$ condition. 
\end{lm}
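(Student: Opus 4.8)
The plan is to run the three standard steps of a Palais--Smale analysis: boundedness of an arbitrary $(PS)_c$ sequence, extraction of a weakly convergent subsequence that converges strongly in every $\ell^q$ with $q\ge p$, and an $(S_+)$-type monotonicity argument upgrading weak to strong convergence in $H$.

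First I would fix a sequence $\{u_n\}\subset H$ with $J(u_n)\to c$ and $J'(u_n)\to 0$ in $H^*$, and establish boundedness exactly as in (\ref{ba}). Forming $J(u_n)-\frac1\theta\langle J'(u_n),u_n\rangle$ and using $(f_3)$ together with $R_\alpha\ge 0$ and $F\ge 0$, the Choquard contribution $\frac12\int_{\mathbb{Z}^N}(R_\alpha\ast F(u_n))\big(\tfrac{2}{\theta}f(u_n)u_n-F(u_n)\big)\,d\mu$ is nonnegative, so that $\big(\tfrac1p-\tfrac1\theta\big)\|u_n\|^p\le c+o_n(1)+o_n(1)\|u_n\|$. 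Since $\theta>p$, this forces $\{u_n\}$ to be bounded. By the compact embedding of Lemma \ref{lgg}, up to a subsequence there is $u\in H$ with $u_n\rightharpoonup u$ in $H$, $u_n\to u$ pointwise, and $u_n\to u$ in $\ell^q(\mathbb{Z}^N)$ for all $q\ge p$.

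Next I would peel off the nonlocal term, showing $\int_{\mathbb{Z}^N}(R_\alpha\ast F(u_n))f(u_n)(u_n-u)\,d\mu\to 0$. By the HLS inequality (\ref{bo}) this is controlled by $C\|F(u_n)\|_{\frac{2N}{N+\alpha}}\,\|f(u_n)(u_n-u)\|_{\frac{2N}{N+\alpha}}$; the first factor is bounded via (\ref{ae}) and the boundedness of $\{u_n\}$, while the second vanishes because $|f(u_n)|\le\varepsilon|u_n|^{p-1}+C_\varepsilon|u_n|^{\tau-1}$ by (\ref{ad}), and a Hölder splitting separates the bounded powers $\|u_n\|_{\frac{2Np}{N+\alpha}}$, $\|u_n\|_{\frac{2N\tau}{N+\alpha}}$ (both exponents lie in $[p,\infty)$ thanks to $\alpha<N$ and $\tau>\frac{(N+\alpha)p}{2N}$) from the factor $u_n-u$, which tends to $0$ in every such $\ell^q$. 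Since $\langle J'(u_n),u_n-u\rangle\to 0$ (as $J'(u_n)\to 0$ and $\{u_n-u\}$ is bounded), I conclude $\int_{\mathbb{Z}^N}\big(|\nabla u_n|^{p-2}\nabla u_n\nabla(u_n-u)+h|u_n|^{p-2}u_n(u_n-u)\big)\,d\mu\to 0$. The corresponding integral with $u_n$ replaced by $u$ also tends to $0$, being a fixed functional in $H^*$ evaluated at $u_n-u\rightharpoonup 0$.

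Finally I would subtract the two and apply the strong monotonicity of the $p$-Laplacian: for $p\ge 2$ the elementary inequality $(|a|^{p-2}a-|b|^{p-2}b)\cdot(a-b)\ge 2^{2-p}|a-b|^p$, used pointwise on the discrete gradient vectors attached to the form $\Gamma$ and in scalar form on the potential terms, yields
\[
2^{2-p}\|u_n-u\|^p\le\int_{\mathbb{Z}^N}\big(|\nabla u_n|^{p-2}\nabla u_n-|\nabla u|^{p-2}\nabla u\big)\nabla(u_n-u)\,d\mu+\int_{\mathbb{Z}^N} h\big(|u_n|^{p-2}u_n-|u|^{p-2}u\big)(u_n-u)\,d\mu,
\]
whose right-hand side tends to $0$; hence $u_n\to u$ in $H$, which is the $(PS)_c$ condition. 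I expect the main obstacle to be the third ingredient combined with the nonlocal estimate: verifying that the Choquard convolution term genuinely splits off under only the continuity and growth hypotheses $(f_1)$-$(f_2)$, so that the $p$-Laplacian part can be isolated and the vector monotonicity inequality --- which must be set up carefully against the graph gradient form $\Gamma$ rather than a classical gradient --- can deliver strong convergence.
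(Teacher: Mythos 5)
Your proposal is correct and follows essentially the same route as the paper: boundedness via the $(f_3)$ computation in (\ref{ba}), compact embedding from Lemma \ref{lgg}, the HLS--H\"older splitting to kill the Choquard term against $u_n-u$, and then the comparison of $(u_n,u_n-u)$ with $(u,u_n-u)$. The only difference is that you make explicit the vector monotonicity inequality $(|a|^{p-2}a-|b|^{p-2}b)\cdot(a-b)\geq 2^{2-p}|a-b|^{p}$ applied to the gradient form $\Gamma$ and the potential term, a step the paper compresses into the assertion that the two limits force $\|u_n-u\|\to 0$; this is a welcome clarification rather than a departure.
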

\begin{proof}
For any $c\in\mathbb{R}$, let $\left\{u_{n}\right\}$ be a $(P S)_{c}$ sequence of $J$,
\begin{equation*}
 J\left(u_{n}\right)\rightarrow c, \quad \text { and } \quad J^{\prime}\left(u_{n}\right)\rightarrow0.
 \end{equation*}
By (\ref{ba}), we get that $\{u_n\}$ is bounded in $H$. Then by Lemma \ref{lgg}, passing to a subsequence if necessary, there exists $u \in H$ such that
\begin{equation}\label{ua}
\begin{cases}u_{n} \rightharpoonup u, & \text { in } H, \\ u_{n}\rightarrow u, & \text{pointwise~in~} \mathbb{Z}^N, \\ u_{n} \rightarrow u, & \text { in } \ell^{q}(\mathbb{Z}^N),q\geq p.\end{cases}
\end{equation}
Then it follows from the HLS inequality (\ref{bo}), H\"{o}lder inequality, the boundedness of $\{u_n\}$ and (\ref{ua}) that
$$
\begin{aligned}
&\int_{\mathbb{Z}^N}(R_\alpha \ast F(u_n))|f(u_n)(u_n-u)|\,d\mu
\\ \leq& C\left(\int_{\mathbb{Z}^N} |F(u_n)|^{\frac{2N}{N+\alpha}}\,d\mu  \right)^{\frac{N+\alpha}{2N}}\left(\int_{\mathbb{Z}^N} |f(u_n)(u_n-u)|^{\frac{2N}{N+\alpha}}\,d\mu  \right)^{\frac{N+\alpha}{2N}}\\ \leq&C\left(\int_{\mathbb{Z}^N} (|u_n|^p+|u_n|^\tau)^{\frac{2N}{N+\alpha}}\,d\mu  \right)^{\frac{N+\alpha}{2N}}\left(\int_{\mathbb{Z}^N} \left[(|u_n|^{p-1}+|u_n|^{\tau-1})|u_n-u|\right]^{\frac{2N}{N+\alpha}}\,d\mu  \right)^{\frac{N+\alpha}{2N}}\\ \leq& C\left[\left(\int_{\mathbb{Z}^N} (|u_n|^{p-1}|u_n-u|)^{\frac{2N}{N+\alpha}}\,d\mu  \right)^{\frac{N+\alpha}{2N}}+\left(\int_{\mathbb{Z}^N} (|u_n|^{\tau-1}|u_n-u|)^{\frac{2N}{N+\alpha}}\,d\mu  \right)^{\frac{N+\alpha}{2N}}\right] \\ \leq& C\|u_n\|^{p-1}_{\frac{2Np}{N+\alpha}}\|u_n-u\|_{\frac{2Np}{N+\alpha}}+C\|u_n\|^{\tau-1}_{\frac{2N\tau}{N+\alpha}}\|u_n-u\|_{\frac{2N\tau}{N+\alpha}} \\ \leq& C\|u_n-u\|_{\frac{2Np}{N+\alpha}}+C\|u_n-u\|_{\frac{2N\tau}{N+\alpha}} \\ \rightarrow&0,\quad n\rightarrow\infty.
\end{aligned}
$$
Therefore, one gets that
\begin{eqnarray*}
    |(u_n,u_n-u)|&\leq& \left|\langle J'(u_n),u_n-u\rangle\right|+\left|\int_{\mathbb{Z}^N}(R_\alpha \ast F(u_n))f(u_n)(u_n-u)\,d\mu\right|\\&\leq&o_n(1)\|u_n-u\|+\int_{\mathbb{Z}^N}(R_\alpha \ast F(u_n))|f(u_n)(u_n-u)|\,d\mu\\&\rightarrow& 0,\quad n\rightarrow\infty.
\end{eqnarray*}
Note that $u_n\rightharpoonup u$ in $H$, then we have $(u,u_n-u)\rightarrow 0$ as $n\rightarrow\infty$. The previous two results state that
$$\|u_n-u\|\rightarrow 0,\quad n\rightarrow\infty.$$
Then we obtain that $u_n\rightarrow u$ in $H$ since $u_n\rightarrow u$ pointwise in $\mathbb{Z}^N$.

\end{proof}

\
\

{\bf Proof of Theorem \ref{t1}. } By Lemma \ref{lm} and Lemma \ref{ln}, one sees that $J$ satisfies the geometric structure and $(PS)_{c_0}$ condition with
$$c_0=\inf\limits_{\gamma\in\Gamma}\max\limits_{s\in [0,1]} J(\gamma(s)),$$
where
$$\Gamma=\{\gamma\in C([0,1],H):\gamma(0)=0, J(\gamma(1))<0\}.$$ Then by the mountain pass theorem, there exists $u\in H$ such that $J(u)=c_0$ and $J'(u)=0$.  We get that $c_0=c>0$ by Corollary \ref{jb}. Hence $u\neq 0$ and $u\in \mathcal{M}.$ We complete the proof. \qed

\
\

{\bf Declarations}

\
\

{\bf Conflict of interest:} The author declares that there are no conflicts of interests regarding the publication of
this paper.

\end{document}